\def\square{\hfill\hbox{\vrule height .9ex width .8ex depth -.1ex}}
\providecommand{\U}[1]{\protect\rule{.1in}{.1in}}
\newtheorem{theorem}{Theorem}
\theoremstyle{plain}
\newtheorem{corollary}{Corollary}
\newtheorem{definition}{Definition}
\newtheorem{example}{Example}
\newtheorem{lemma}{Lemma}
\newtheorem{proposition}{Proposition}
\newtheorem{remark}{Remark}
\numberwithin{equation}{section}
\newcommand{\marg}{\ensuremath{\operatorname*{marg}}}
\newcommand{\ubar}[1]{\ensuremath{\vtop{\hbox{$#1$}\kern .22ex\hrule height .06ex width .38em}}}
\def \R{\mathbb{R}}
\def \N{\mathbb{N}}
\def \1{{\bf 1}}
\newcommand{\CC}{{\mathcal{C}}}
\newcommand{\CD}{{\mathcal{D}}}
\newcommand{\CG}{{\mathcal{G}}}
\newcommand{\CU}{{\mathcal{U}}}
\newcommand{\CQ}{{\mathcal{Q}}}
\def \Min{ {\rm Min} }
\def \val{ {\rm val } }
\def \lim{ {\rm lim} }
\let\inf\relax \DeclareMathOperator*\inf{\vphantom{p}inf}
\let\max\relax \DeclareMathOperator*\max{\vphantom{p}max}
\let\min\relax \DeclareMathOperator*\min{\vphantom{p}min}
\begin{document}
\title{The Large Space of Information Structures}
\date{\today}
\author{Fabien Gensbittel}
\address{F. Gensbittel and J. Renault : Toulouse School of Economics, University Toulouse Capitole.}
\author{Marcin P{e}ski}
\address{M. P{e}ski : Department of Economics, University of Toronto.}
\author{J\'{e}r\^{o}me Renault}
 
\maketitle

\begin{abstract}
  We revisit the question of modeling incomplete information among 2 Bayesian players, following an ex-ante approach based on values of zero-sum games. 
$K$ being the finite set of possible parameters, an  information structure is defined as  a probability distribution $u$  with finite support   over $K\times \N \times \N$ with the interpretation  that:  $u$ is publicly known by the players, $(k,c,d)$ is selected according to $u$, then $c$ (resp. $d$) is  announced to player 1 (resp. player 2). Given a payoff structure $g$, composed of matrix games indexed by the state, the value of the incomplete information game defined by $u$ and $g$ is denoted $\val(u,g)$. We evaluate the pseudo-distance $d(u,v)$ between 2 information structures $u$ and $v$ by the supremum of $|\val(u,g)-\val(v,g)|$ for all $g$  with payoffs in $[-1,1]$, and  study   the metric space $Z^*$  of equivalent information structures.

We first provide a  tractable characterization of $d(u,v)$, as the minimal distance between 2 polytopes, and  recover the characterization of Peski (2008) for $u \succeq v$, generalizing to 2 players Blackwell's  comparison of experiments via garblings.  We then show that $Z^*$, endowed with a weak distance $d_W$, is homeomorphic to the  set of consistent  probabilities with finite support over the universal belief space of  Mertens and Zamir.
Finally we show the existence of   a sequence of information structures,  where players acquire more and more information,  and of $\varepsilon>0$ such that  any two elements of the sequence have distance at least $\varepsilon$  : having more and more information may lead nowhere. As a consequence, the completion of $(Z^*,d)$ is not compact, hence not  homeomorphic to the set of consistent probabilities  over the states of the world {\it \`a la} Mertens and Zamir. This  example answers by the negative the second (and last unsolved)  of the three problems posed by  J.F. Mertens in his paper ``Repeated Games",  ICM 1986.
\end{abstract}

\section{Introduction}

Given a    countable set $S$, we denote by $\Delta(S)=\{x=(x(s))_{s \in S}\in \R^S_+, \sum_{s\in S} x(s)=1\}$ the set of probability  distributions over $S$, and by $\Delta_f(S)$ the set set of probability  distributions with finite support over $S$.  The Dirac measure on an element $s$  will be denoted $\delta_s$.  More generally if  $S$ is a compact metric space, $\Delta(S)$ is the set of Borel probability distributions on $S$, and is endowed with the weak topology. 

\section{The Space of Information Structures}

Throughout the paper, $K$ is a fixed finite set of parameters  or states of nature, e.g. $K=\{0,1\}$ or $K=\{\textcolor{blue}{Blue}, \textcolor{red}{Red}\}$. There is a true state $k$ in $K$, which is imperfectly known by two Bayesian players. The general question is : What is the set of possible situations ?

\begin{definition}
An information structure is a probability with finite support over $K\times \N \times \N$.  The set of information structures is denoted by $\CU=\Delta_f(K \times \N \times \N)$.
\end{definition}

The interpretation of an information structure $u$  is the following : $u$ is publicly known by the players,   a  triple $(k,c,d)$ is selected according to $u$, then the state is $k$, player 1 learns $c$ and player 2 learns $d$.  So an information structure represents an ex-ante situation, before the players have received their signals.

Unless otherwise specified, in  our examples $K$ will have two elements and $u$ will be uniform over a finite subset of $K\times \N \times \N$.

\begin{example} \rm  $K=\{  \textcolor{blue}{Blue},   \textcolor{red}{Red}\}$, and $u$ is represented by: 
 \begin{center}
 \begin{picture}(50,90)

\textcolor{blue}{\put(16,60){\line(1,0){90}}}

\textcolor{blue}{\put(15,30){\line(1,0){90}}}

\textcolor{red}{ \qbezier(10,30) (10,30)(100,00) }

\textcolor{red}{ \qbezier(10,60) (10,60)(100,30)}

\put(10, 30){\oval(15, 80)}
\put(100, 30){\oval(15, 80)}

\put(10,60){\circle*{5}}
\put(10,30){\circle*{5}}
\put(100,00){\circle*{5}}
\put(100,60){\circle*{5}}
\put(100,30){\circle*{5}}

 \put(0,80){$P1$}
  \put(90,80){$P2$}
\put(-10,60){$0$}
\put(-10,30){$1$}
\put(110,0){$2$}
\put(110,60){$0$}
\put(110,30){$1$}

\end{picture}

\end{center}
\noindent Here with probability 1/4, the top blue edge is selected, which means that the state is blue, player 1 receives the signal 0 and  player 2  receives  the signal 0. With  probability 1/4, the top red edge is selected :  the state is red, player 1 receives the signal 0 and player 2 receives the signal 1. Etc... 

After receiving signal 0, player 1 believes that both states in $K$ are equally likely. It is  the same after receiving signal 1. However the two signals of player 1 convey distinct information for him: after receiving signal 0, player 1 knows that if the state is blue  then player 2 knows it, whereas after receiving signal 1, player 1 knows that if the state is blue then player 2 has a uniform belief on $K$. \square

\end{example}

The central idea is to evaluate an information structure via   the values of    associated zero-sum Bayesian games. We first define   payoff structures, which are  given by  a matrix game with payoff in $[-1,1]$  for each state in $K$. Since we don't want to fix a priori the size of the matrices, we will formally consider  infinite matrices with only finitely many relevant rows and columns.

\begin{definition} Given $L\geq 1$, a payoff structure of size $L$ is a map $g:K\times \N\times \N\to [-1,1]$, such that for all $(k,i,j)$: $g(k,i,j)=-1$ if $i\geq L>j$ and $g(k,i,j)= 1$ if $j\geq L>i$. The set of payoff structures of size $L$ is  denoted by $\CG(L)$, and the set of payoff structures is $\CG=\bigcup_{L\geq 1} \CG(L)$.
\end{definition}

\begin{example} \label{exa2} \rm $K=\{  \textcolor{blue}{Blue},   \textcolor{red}{Red}\}$. To represent a payoff structure $g$  of size 2, it is enough to give  a blue and a red matrix such as $\left\{\textcolor{blue}{  \left(\begin{array}{cc}
 0 & 0 \\
-\frac{3}{5} & 1 \\
\end{array}
\right) \;, \; \textcolor{red}{\left(\begin{array}{cc}
 1 & -\frac{3}{5} \\
0 & 0 \\
\end{array}
\right)}}\right\}$. \square
\end{example}

\begin{definition}  An information structure $u$ and a payoff structure $g$ together define  a zero-sum Bayesian game $\Gamma(u,g)$ played as follows:
First,  $(k,c,d)$ is selected according to $u$, player 1 learns $c$ and player 2 learns $d$. Then simultaneously player 1 chooses $i$ in $\N$ and player 2 chooses $j$ in $\N$, 
 and finally the payoff of player 1 is $g(k,i,j)$.
$\Gamma(u,g)$ can be seen as a finite zero-sum game, and we denote  its value by $\val(u,g)$. 
\end{definition}

\begin{example}\label{exa3} \rm Consider the payoff structure  $g$ of example \ref{exa2}.

1)  The information structure is $u_1$, where players have complete information on the state: 

 \begin{center}\begin{picture}(50,90)

\textcolor{blue}{\put(15,60){\line(1,0){90}}}

\textcolor{red}{\put(15,30){\line(1,0){90}}}

\put(10, 45){\oval(15, 50)}
\put(100, 45){\oval(15, 50)}

\put(10,60){\circle*{5}}
\put(10,30){\circle*{5}}
 
\put(100,60){\circle*{5}}
\put(100,30){\circle*{5}}

 \put(0,80){$P1$}
  \put(90,80){$P2$}

 \put(-10,60){$0$}
\put(-10,30){$1$}
 
\put(110,60){$0$}
\put(110,30){$1$}

%

\end{picture}  \end{center}
Here the unique optimal strategies are, for player 2,  to play the left column after signal 0 and to play the right column after signal 1  and, for player 1 to play the top  row  after signal 0 and the bottom row after signal 1. $\val(u_1,g)=0$.
%
%
%
%
%
%
%
%
%
%
%
%
%
%

2) The  information structure is $u_2$, with lack of information on the side of player 2 : \begin{center}
 \begin{picture}(50,90)

\textcolor{blue}{\put(15,60){\line(1,0){90}}}

\textcolor{red}{ \qbezier(10,30) (10,30)(100,60) }

\put(10, 45){\oval(15, 50)}
\put(100, 45){\oval(15, 50)}

\put(10,60){\circle*{5}}
\put(10,30){\circle*{5}}
 
\put(100,60){\circle*{5}}
\put(100,30){\circle*{5}}

 \put(0,80){$P1$}
  \put(90,80){$P2$}

 \put(-10,60){$0$}
\put(-10,30){$1$}
 
\put(110,60){$0$}
\put(110,30){$1$}

%
%
%
\end{picture}

\end{center}
Here the unique optimal strategy for player 1 is to play bottom after 0 and top after 1, whereas any strategy of player 2 which plays after signal 0 both left and right with probability at least 3/8  is optimal. And $\val(u_2,g)=1/5$. Comparing with $u_1$, we recover that optimal strategies of player 1 do not only depend on his belief  on $K$.\\

3) Here the information structure  $u_3$ is given by : 

\vspace{1cm}

  \begin{center}
 \begin{picture}(50,120)

\textcolor{blue}{\put(30,60){\line(1,0){90}}}

\textcolor{blue}{\put(30,30){\line(1,0){90}}}

\textcolor{blue}{\put(30,120){\line(1,0){90}}}

\textcolor{blue}{\put(30,90){\line(1,0){90}}}

\textcolor{red}{ \qbezier(15,30) (15,30)(110,00) }

\textcolor{red}{ \qbezier(15,60) (15,60)(110,30)}

\textcolor{red}{ \qbezier(15,90) (15,90)(100,60) }

\textcolor{red}{ \qbezier(15,120) (15,120)(100,90)}

\put(10, 60){\oval(15, 130)}
\put(100, 60){\oval(15, 130)}

\put(10,60){\circle*{5}}
\put(10,30){\circle*{5}}
\put(10,90){\circle*{5}}
\put(10,120){\circle*{5}}
\put(100,90){\circle*{5}}
\put(100,120){\circle*{5}}
\put(100,00){\circle*{5}}
\put(100,60){\circle*{5}}
\put(100,30){\circle*{5}}

 \put(0,130){$P1$}
  \put(90,130){$P2$}

\put(-10,115){$0$}
\put(-10,90){$1$}
\put(-10,60){$2$}
\put(-10,30){$3$}

\put(110,0){$4$}
\put(110,60){$2$}
\put(110,30){$3$}
\put(110,90){$1$}
\put(110,120){$0$}

 
%
%
%
%

\end{picture}

\end{center}  

Here $\val(u_3,g)=1/10$, and the unique optimal strategy of player 1 is to play top after signals 0 and 2, and bottom after signal 1 and 3. Note that   player 1 should play  very differently after receiving signal 1 and 2, whereas  in both cases : player 1 believes that  both states on $K$ are equally likely, player 1 believes that player 2 believes that both states are equally likely, and player 1 believes that player  2 believes that player 1 believes that both states are equally likely. \square \end{example}

  Given $u$, $v$ in $\CU$, a natural distance between $u$ and $v$ is given by the $L^1$-norm:
$$\|u-v\|=\sum_{k\in K, (c,d)\in \N^2} |u(k,c,d)-v(k,c,d)|.$$
If  $g$ is a payoff structure in  $\CG$, since all  payoffs  are in $[-1,1]$  it is easy to see that $|\val(u,g)-\val(v,g)|\leq \|u-v\|.$ \\

We   now order and compare information structures.

\begin{definition} Given  $u$, $v$   in $\CU$, say that  $u\succeq v$  if   for all $g$ in $\CG$, $\val(u,g)\geq \val(v,g)$. \end{definition}

\begin{definition}Given  $u$, $v$   in $\CU$,
define : $$d(u,v)=\sup_{g \in \CG} |\val(u,g)-\val(v,g)|.$$
\end{definition}

Clearly $d(u,v)\leq \|u-v\|\leq 2$. $d(u,v)=d(v,u)\in [0,1]$ and  $d$ satisfies the triangular inequality but we may have $d(u,v)=0$ for $u\neq v$, so $d$ is a pseudo-distance on $\CU$. Similarly $\preceq$ is reflexive and transitive but one may have $u\succeq v$ and $v\succeq u$ for $u\neq v$. If we start  from an information structure $u$ and relabel the signals of the players, we obtain an information structure  $u'$ which is formally different from $u$, but ``equivalent" to $u$.

\begin{definition} Say that $u$ and $v$ are equivalent, and write $u\sim v$,  if for all game structures $g$ in $\CG$, $\val(u,g)= \val(v,g)$. We let $\CU^*=\CU/\sim$ be the set of equivalence classes. 
\end{definition} 

$d$ and $\succeq$ are naturally defined on $\CU^*$, and by construction $d$ is a distance and $\succeq$ is a partial order on $\CU^*$. We will  study the metric $d$, and focus on  three main questions:

1) How to compute $d(u,v)$ ?

2) What it the link between $\CU^*$  and the Mertens-Zamir space ? 

3) How large is the space of information structures ? Given $\varepsilon>0$, can we cover $\CU^*$ with finitely many balls of radius $\varepsilon$ ? \\

 Whereas previous papers in the literature restrict attention\footnote{For instance, one can read in \cite{BDM2010}  ``We leave open the question of what happens when the components of the state on which the players have some information fail to be independent.... In this situation the notion of monotonicity is unclear, and the duality method is not well understood."} to a particular subset of $\CU$ (independent information, lack of information on one side, fixed support...), we will study the general case of information structures in $\CU$ and $\CU^*$.

\section{Computing $d(u,v)$}

 We give here a tractable  characterization of $d(u,v)$,   based on duality between signals and actions. We start with  the notion of garbling, used by Blackwell to compare statistical experiments \cite{blackwell_equivalent_1953}.

\begin{definition} A garbling is an element $q: \N \to \Delta_f(\N)$, and the set of all garblings is denoted by $\CQ$. 
Given a garbling $g$ in $\CQ$  and an information structure $u$ in $\CU$, we define the information structures  $q.u$ and $u.q$ in $\CU$ by: $\forall k\in K, \forall c,c',d,d' \in \N$, $$q.u(k,c',d)=\sum_{c \in \N} u(k,c,d) q(c)(c') \;\;\; {\rm and} \; \;\; q.u(k,c,d')=\sum_{d \in \N} u(k,c,d) q(d)(d').$$ \end{definition}

The   interpretation of   $q.u$ is as follows: first $(k,c,d)$ is selected according to $u$, the state is $k$ and player 2 learns $d$. $c'$ is selected according to the probability $q(c)$, and player 1 learns $c'$. Here, the signal received by player 1 has been deteriorated through the garbling $q$. And $u.q$ corresponds to the dual situation where the signal of player 2 has been deteriorated. Since in a zero-sum game the value is monotonic in the information of the players, regardless of the payoffs player 1 always weakly  prefers $u$ to $q.u$, and $u.q$ to $u$ :  
 \begin{lemma} For all $u$ in $\CU$ and $q$ in $\CQ$, \;\; $q.u \preceq u \preceq u.q$ \end{lemma}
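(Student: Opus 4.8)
The plan is to prove the two inequalities $q.u\preceq u$ and $u\preceq u.q$ separately. They are dual to each other under exchanging the roles of the two players, so I would concentrate on the first, namely $\val(q.u,g)\le\val(u,g)$ for every $g\in\CG$. The guiding idea is that player 1 in $\Gamma(u,g)$ can \emph{privately} reproduce the effect of the garbling: after receiving his (better) signal $c$, he draws a deteriorated signal $c'$ according to $q(c)$ and then behaves exactly as he would with signal $c'$ in $\Gamma(q.u,g)$. Since this self-garbling is nothing but an ordinary randomized strategy available to him, he can guarantee in $\Gamma(u,g)$ at least what he could guarantee in the more informationally degraded game $\Gamma(q.u,g)$.

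To make this precise, fix $g\in\CG$. By the definition of $\CG$ and the finite support of the information structures, $\Gamma(u,g)$ and $\Gamma(q.u,g)$ are finite zero-sum games, so the minmax theorem applies and their values are attained. I would write a strategy of player 1 as a map $\sigma_1\colon\N\to\Delta_f(\N)$ from signals to mixed actions, and similarly $\sigma_2$ for player 2, and denote by
$$\gamma(w,g,\sigma_1,\sigma_2)=\sum_{k,c,d} w(k,c,d)\sum_{i,j}\sigma_1(c)(i)\,\sigma_2(d)(j)\,g(k,i,j)$$
the resulting expected payoff in $\Gamma(w,g)$. The key structural remark is that player 2's signals are identical in $u$ and $q.u$, so any $\sigma_2$ is a legitimate strategy in both games.

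Given a player-1 strategy $\sigma_1$ in $\Gamma(q.u,g)$, I would define the mimicking strategy $\tilde\sigma_1$ in $\Gamma(u,g)$ by $\tilde\sigma_1(c)=\sum_{c'}q(c)(c')\,\sigma_1(c')$, which again lies in $\Delta_f(\N)$ since $q(c)$ and each $\sigma_1(c')$ have finite support. The central computation is then a reindexing of the summation: substituting this definition and using $q.u(k,c',d)=\sum_c u(k,c,d)\,q(c)(c')$ yields the identity
$$\gamma(u,g,\tilde\sigma_1,\sigma_2)=\gamma(q.u,g,\sigma_1,\sigma_2)\qquad\text{for every }\sigma_2.$$
Applying this to an optimal $\sigma_1$ for player 1 in $\Gamma(q.u,g)$ gives $\gamma(u,g,\tilde\sigma_1,\sigma_2)\ge\val(q.u,g)$ for all $\sigma_2$, hence $\val(u,g)\ge\min_{\sigma_2}\gamma(u,g,\tilde\sigma_1,\sigma_2)\ge\val(q.u,g)$. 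Since $g\in\CG$ was arbitrary, this is exactly $q.u\preceq u$.

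The inequality $u\preceq u.q$ follows by the dual argument, with the minimizing player 2 doing the mimicking: player 2 in $\Gamma(u,g)$ self-garbles his signal $d$ through $q$ via $\tilde\sigma_2(d)=\sum_{d'}q(d)(d')\,\sigma_2(d')$, and the analogous reindexing using $u.q(k,c,d')=\sum_d u(k,c,d)\,q(d)(d')$ gives $\gamma(u,g,\sigma_1,\tilde\sigma_2)=\gamma(u.q,g,\sigma_1,\sigma_2)$ for all $\sigma_1$. Taking an optimal $\sigma_2$ for player 2 in $\Gamma(u.q,g)$ yields $\max_{\sigma_1}\gamma(u,g,\sigma_1,\tilde\sigma_2)\le\val(u.q,g)$, whence $\val(u,g)\le\val(u.q,g)$. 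I do not expect a serious obstacle: the only points needing care are the justification that the games are finite so that values exist and are attained (granted by the definitions of $\CG$ and $\CU$), and the bookkeeping in the interchange-of-sums identity, which is routine once the self-garbling strategy is written down. The genuinely substantive content is the modeling observation that deteriorating a player's own signal is equivalent to a private randomization that this player can always perform for himself, leaving the opponent's information untouched.
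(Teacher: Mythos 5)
Your proof is correct and matches the paper's intended argument: the paper justifies this lemma in one line by the monotonicity of the value in each player's information, and the self-garbling identity $\gamma(u,g,\tilde\sigma_1,\sigma_2)=\gamma(q.u,g,\sigma_1,\sigma_2)$ you compute is exactly the associativity $\langle g,(\sigma_1.q).u.\sigma_2\rangle=\langle g,\sigma_1.(q.u).\sigma_2\rangle$ that the paper itself exploits in the proof of Theorem \ref{thm1} and in its remark on optimal strategies. Nothing further is needed.
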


To compute $d(u,v)$, we will use here a second and new interpretation. A garbling $q$ in $\CQ$ will  also be seen as a behavior strategy of a player in a Bayesian game $\Gamma(u,g)$: if the signal received is $c$, play the mixed action $q(c)$. \\

\noindent {\bf Notations: } Given $L\geq 1$, we denote by $\CU(L)$ the set of information structures $u$ with support in $K\times \{0,...,L-1\}^2$ : only the first $L$ signals of each player matter.  We also  denote by $\CQ(L)$ the set of garblings $q: \N \to \Delta_f(\N)$, with range in $\Delta(\{0,...,L-1\})$.

  $\CU(L)$ is  a convex   compact subset of a finite dimensional vector space. Notice that for $u$ in $\CU$ and $L\geq 1$, the sets $\CQ(L). u=\{q.u, q\in \CQ(L)\}$ and $u.\CQ(L)=\{u.q, q\in \CQ(L)\}$ are also convex compacta in Euclidean spaces. \\
  
  Consider now $u$ and $v$ in $\CU$. Since $u$ and $v$ have finite support, we can find  $L$ such that both $u$ and $v$ are in $\CU(L)$. Our first theorem shows that  $ \sup_{g \in \CG}\left(\val(v,g)-\val(u,g) \right)$ can be simply  computed as the minimal distance, measured by the norm $\|.\|$,  between the convex compact subsets   $\CQ(L). u$ and $v.\CQ(L)$ of $\CU(L)$. Moreover, the supremum is achieved by a payoff structure of size $L$.

\begin{theorem} \label{thm1}  For $u$, $v$ in $\CU(L)$,
\begin{eqnarray*}
\sup_{g \in \CG}\left(\val(v,g)-\val(u,g) \right)&= &  \max_{g \in \CG(L)} \left(\val(v,g)-\val(u,g)\right),\\
& =&  \min_{q_1\in \CQ(L), q_2\in \CQ(L)} \| q_1.u- v.q_2 \|,\\
 &=& \min_{q_1 \in \CQ,q_2\in \CQ}\| q_1.u- v.q_2\|.
 \end{eqnarray*}
\end{theorem}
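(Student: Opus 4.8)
The plan is to establish the two outer identities by proving a cyclic chain of four inequalities among
$$A:=\sup_{g\in\CG}\big(\val(v,g)-\val(u,g)\big),\quad B:=\max_{g\in\CG(L)}\big(\val(v,g)-\val(u,g)\big),$$
$$C:=\min_{q_1,q_2\in\CQ(L)}\|q_1.u-v.q_2\|,\quad D:=\min_{q_1,q_2\in\CQ}\|q_1.u-v.q_2\|,$$
namely $B\le A\le D\le C\le B$. The starting observation is that a behavior strategy of a player in $\Gamma(u,g)$ is exactly a garbling: if player $1$ uses $p\in\CQ$ and player $2$ uses $q\in\CQ$, the expected payoff is $\langle p.u.q,\,g\rangle$, where $\langle w,g\rangle=\sum_{k,i,j}w(k,i,j)g(k,i,j)$. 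Since $\Gamma(u,g)$ is a finite zero-sum game, the minimax theorem gives $\val(u,g)=\max_{p}\min_{q}\langle p.u.q,g\rangle=\min_{q}\max_{p}\langle p.u.q,g\rangle$. For $g\in\CG(L)$ and $u\in\CU(L)$, the boundary conventions defining $\CG(L)$ make any action $\ge L$ worst-possible for the deviator against an opponent confined to $\{0,\dots,L-1\}$ (it yields payoff $-1$ for player $1$, or $+1$ for player $2$); hence optimal play stays in $\CQ(L)$, the value depends only on the block $M=(g(k,i,j))_{i,j<L}\in[-1,1]^{K\times L\times L}$, and conversely every such $M$ extends to some $g\in\CG(L)$.

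The upper bound $A\le D$ is the easy direction and uses the Lemma. For every $q_1,q_2\in\CQ$ and every $g\in\CG$, the relations $q_1.u\preceq u$ and $v\preceq v.q_2$ give $\val(v,g)-\val(u,g)\le\val(v.q_2,g)-\val(q_1.u,g)$, and the $1$-Lipschitz estimate $|\val(w,g)-\val(w',g)|\le\|w-w'\|$ recorded in the text yields $\val(v.q_2,g)-\val(q_1.u,g)\le\|q_1.u-v.q_2\|$. Taking the supremum over $g$ and the infimum over $q_1,q_2$ gives $A\le D$, while $D\le C$ is immediate from $\CQ(L)\subseteq\CQ$, and $B\le A$ is immediate from $\CG(L)\subseteq\CG$.

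The heart of the argument is the remaining inequality $C\le B$, which I obtain by choosing the right test strategies to linearize the value difference. Fix $g\in\CG(L)$ with block $M$. Letting player $1$ play truthfully (the identity garbling) in $\Gamma(v,g)$ guarantees $\val(v,g)\ge\min_{q\in\CQ(L)}\langle v.q,M\rangle$, since against a truthful player $1$ confined to $\{0,\dots,L-1\}$ player $2$ never benefits from an action $\ge L$; dually, letting player $2$ play truthfully in $\Gamma(u,g)$ gives $\val(u,g)\le\max_{p\in\CQ(L)}\langle p.u,M\rangle$. Subtracting and maximizing over $g\in\CG(L)$, i.e. over $\|M\|_\infty\le1$,
$$B\ \ge\ \max_{\|M\|_\infty\le1}\Big(\min_{q\in\CQ(L)}\langle v.q,M\rangle-\max_{p\in\CQ(L)}\langle p.u,M\rangle\Big)\ =\ \max_{\|M\|_\infty\le1}\ \min_{p,q\in\CQ(L)}\ \langle v.q-p.u,\,M\rangle .$$
The map $(p,q,M)\mapsto\langle v.q-p.u,M\rangle$ is bilinear on the product of the convex compacta $\CQ(L)\times\CQ(L)$ and the $\ell^\infty$-ball $\{\|M\|_\infty\le1\}$, so Sion's minimax theorem permits exchanging $\max_M$ and $\min_{p,q}$; since $\max_{\|M\|_\infty\le1}\langle w,M\rangle=\|w\|$, the right-hand side equals $\min_{p,q\in\CQ(L)}\|v.q-p.u\|=C$. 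Hence $C\le B$.

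Combining the four inequalities forces $B=A=D=C$, which is exactly the statement; moreover $B$ is a maximum over the compact set $\CG(L)$ and $C$ a minimum over the compacta $\CQ(L).u$ and $v.\CQ(L)$, so both are attained, showing in particular that the supremum defining $A$ is realized by a payoff structure of size $L$. I expect the main obstacle to be the third paragraph: identifying truthful play of the correct player on each side as the device that converts the difference of two max–min values into the single bilinear expression, and then justifying both the size-$L$ reduction (so that off-block actions are genuinely irrelevant) and the Sion exchange, so that the transformed problem is precisely the $\ell^1$-distance between the polytopes $\CQ(L).u$ and $v.\CQ(L)$.
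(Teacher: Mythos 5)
Your proposal is correct and follows essentially the same route as the paper: the easy inequality $A\le D$ via the garbling lemma and the $1$-Lipschitz bound, and the key inequality $C\le B$ via truthful (identity) play of the appropriate player on each side, the duality $\max_{\|M\|_\infty\le 1}\langle w,M\rangle=\|w\|_1$, and a Sion minimax exchange over the compacta $\CQ(L)\times\CQ(L)$ and $\CG(L)$. The only difference is presentational (a four-term cyclic chain rather than two separate inequalities), so there is nothing substantive to add.
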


Since $d(u,v)=\max\{\sup_{g \in \CG}\left(\val(v,g)-\val(u,g) \right), \sup_{g \in \CG}$$\left(\val(u,g)-\val(v,g) \right)\}$, the following corollary is immediate, and explains how to compute $d(u,v)$. 

\begin{corollary}  \label{cor1} For $u$, $v$ in $\CU$,
$$d(u,v)  = \max_{g \in \CG} |\val(u,g)-\val(v,g)| =\max\left\{ \min_{q_1\in \CQ, q_2\in \CQ}\|q_1.u- v.q_2\|,  \min_{q_1\in \CQ, q_2\in \CQ}\|u.q_1-  q_2.v\|\right\}.$$
\end{corollary}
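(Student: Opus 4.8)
The plan is to name the four expressions
$A=\sup_{g\in\CG}(\val(v,g)-\val(u,g))$, $B=\max_{g\in\CG(L)}(\val(v,g)-\val(u,g))$, $C=\min_{q_1,q_2\in\CQ(L)}\|q_1.u-v.q_2\|$ and $D=\min_{q_1,q_2\in\CQ}\|q_1.u-v.q_2\|$, and to prove they coincide by closing the cyclic chain $B\le A\le D\le C\le B$. Two links are immediate: $B\le A$ because $\CG(L)\subseteq\CG$, and $D\le C$ because $\CQ(L)\subseteq\CQ$. Throughout I use the behavior-strategy reading of a garbling suggested just before the theorem: writing $\langle g,w\rangle:=\sum_{k,i,j}g(k,i,j)w(k,i,j)$, the expected payoff in $\Gamma(w,g)$ when player $1$ plays $q_1$ and player $2$ plays $q_2$ is $\langle g,q_1.w.q_2\rangle$. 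Since $g\in\CG(L)$ carries the $\pm1$ boundary values, action $0$ weakly dominates every action $\ge L$ for each player, so both may be confined to $\CQ(L)$ and $\val(w,g)=\max_{q_1\in\CQ(L)}\min_{q_2\in\CQ(L)}\langle g,q_1.w.q_2\rangle$, with the two orders interchangeable (a finite game). Letting the opponent play honestly then yields the one-sided bounds
\[ \min_{q_2\in\CQ(L)}\langle g,w.q_2\rangle\le\val(w,g)\le\max_{q_1\in\CQ(L)}\langle g,q_1.w\rangle. \qquad(\dagger) \]

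For $A\le D$, fix $g\in\CG$ and $q_1,q_2\in\CQ$. The monotonicity lemma gives $q_1.u\preceq u$ and $v\preceq v.q_2$, hence $\val(q_1.u,g)\le\val(u,g)$ and $\val(v,g)\le\val(v.q_2,g)$; subtracting and using the elementary estimate $|\val(w,g)-\val(w',g)|\le\|w-w'\|$ already recorded in the text gives $\val(v,g)-\val(u,g)\le\val(v.q_2,g)-\val(q_1.u,g)\le\|q_1.u-v.q_2\|$. Taking the supremum over $g$ and then the infimum over $q_1,q_2\in\CQ$ yields $A\le D$.

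The crux is $C\le B$. Writing $\|w\|=\max_{\|g\|_\infty\le1}\langle g,w\rangle$ and applying the von Neumann minimax theorem to the bilinear map $(g,(q_1,q_2))\mapsto\langle g,q_1.u-v.q_2\rangle$ on the compact convex sets $[-1,1]^{K\times\{0,\dots,L-1\}^{2}}$ and $\CQ(L)\times\CQ(L)$, followed by the sign change $g\mapsto-g$, gives
\[ C=\max_{\|g\|_\infty\le1}\Big(\min_{q_2\in\CQ(L)}\langle g,v.q_2\rangle-\max_{q_1\in\CQ(L)}\langle g,q_1.u\rangle\Big). \]
Extend any such $g$ to a payoff structure $\hat g\in\CG(L)$ by prescribing its forced boundary entries. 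Applying $(\dagger)$ to $v$ (left inequality) and to $u$ (right inequality), the bracket is at most $\val(v,\hat g)-\val(u,\hat g)\le B$; maximizing over $g$ gives $C\le B$. Combining the four links, $B\le A\le D\le C\le B$, so $A=B=C=D$, which is exactly the three asserted equalities.

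I expect the main obstacle to be this last step, and within it the orientation bookkeeping. The raw minimax of the distance produces $\min_{q_1}\langle g,q_1.u\rangle-\max_{q_2}\langle g,v.q_2\rangle$, which by $(\dagger)$ pairs with $\val(u,\cdot)-\val(v,\cdot)$ — the wrong direction; it is the sign flip $g\mapsto-g$ that realigns it with $B=\max_{\CG(L)}(\val(v,\cdot)-\val(u,\cdot))$, and it is essential that the relations in $(\dagger)$ are only inequalities, since the honest-opponent payoff is not itself the value. The supporting technical point that must be in place everywhere is the domination reduction to $\CQ(L)$: it is what makes $\Gamma(w,g)$ genuinely finite, legitimizes both minimax interchanges, and produces $(\dagger)$.
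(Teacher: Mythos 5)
Your argument is correct and is essentially the paper's own proof of Theorem \ref{thm1} (same chain of inequalities, same use of the monotonicity lemma and the strategy/garbling duality for the easy direction, and the same minimax-plus-honest-opponent device for $C\le B$, with von Neumann in place of Sion and an explicit sign flip in place of the paper's working directly with $\langle g, v.q_2-q_1.u\rangle$). The only point left implicit is the final symmetrization $d(u,v)=\max\bigl\{\sup_g(\val(v,g)-\val(u,g)),\,\sup_g(\val(u,g)-\val(v,g))\bigr\}$, i.e.\ applying your identity $A=B=C=D$ to both orderings of the pair and taking the maximum, which is precisely the step the paper itself treats as immediate.
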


We can also recover from theorem \ref{thm1} that :  $ u \succeq v   \Longleftrightarrow   \exists q_1,q_2 \in \CQ, q_1.u= v.q_2$, as obtained by  Peski \cite{peski_comparison_2008}, generalizing the  Blackwell  characterization of more informative experiment   to the multi-player setting.  And we  get  a simple  characterization  of the equivalence relation:   $$u \sim v   \Longleftrightarrow \exists q_1,q_2, q_3,q_4 \in \CQ, \; q_1.u= v.q_2, u.q_3=q_4.v.$$

\vspace{0.5cm}

The proof of Theorem 1 relies on two main aspects : the two interpretations of a garbling (deterioration of signals, and strategy), and the use of a minmax theorem due to the fact that we consider information structures with finitely many signals. \\

\noindent{\bf Proof of Theorem \ref{thm1}.}  

1) We start with general considerations. For $u$ in $\CU$ and $g\in \CG$,  we denote by $\gamma_{u,g}(q_1,q_2)$ the payoff of player 1 in the zero-sum game $\Gamma(u,g)$ when player 1 plays $q_1\in \CQ$  and player 2 plays $q_2\in \CQ$. Extending as usual $g$ to mixed actions, we have: $\gamma_{u,g}(q_1,q_2)=\sum_{k,c,d} u(k,c,d) g(k, q_1(c), q_2(d)).$
  Notice that   in $\Gamma(u,g)$, both players can play the identity strategy $Id$   in $\CQ$ which   plays with probability one the signal received. And  for $u$ in $\CU$ and $g$ in $\CG$, the scalar product $\langle g,u\rangle=\sum_{k \in K, (c ,d)\in \N^2} g(k,c,d) u(k,c,d)$ is well defined, and corresponds to the expectation of $g$ with respect to $u$, and to  the payoff $\gamma_{u,g}(Id, Id)$. 

  Let us now compute the payoff $ \gamma_{u,g}(q_1, q_2)$, for any $q_1$ and $q_2$ in $\CQ$ :
%
%
%
\begin{eqnarray*}
\gamma_{u,g}(q_1, q_2) &=& \sum_{k,c,d} u(k,c,d) g(k,q_1(c),q_2(d))\\
& =&  \sum_{k,c,d} u(k,c,d) \sum_{(c',d')\in \N^2} q_1(c)(c')q_2(d)(d')g(k,c',d')\\
& =&  \sum_{k,c',d'}  g(k,c',d') \sum_{c,d} u(k,c,d)  q_1(c)(c')q_2(d)(d')\\
&=& \sum_{k,c',d'} g(k,c',d') \; q_1.u.q_2 (k,c',d')\\
&=& \langle g, q_1.u.q_2 \rangle. 
\end{eqnarray*}
 \noindent Consequently, $\val(u,g)=\max_{q_1 \in \CQ} \min_{q_2\in \CQ} \langle g, q_1.u.q_2 \rangle=  \min_{q_2\in \CQ}\max_{q_1 \in \CQ} \langle g, q_1.u.q_2 \rangle$. Since both players can play the $Id$ strategy in $\Gamma_{u,g}$, we obtain for   all  $u\in \CU(L)$ and  $g\in \CG(L)$ :
$$ \inf_{q_2\in \CQ} \langle g, u.q_2 \rangle   \; \leq \inf_{q_2\in \CQ(L)} \langle g, u.q_2 \rangle  \; \leq \; \val(u,g) \; \leq \sup_{q_1 \in \CQ(L)} \langle g, q_1.u \rangle  \;  \leq \; \sup_{q_1 \in \CQ} \langle g, q_1.u \rangle .$$

Notice also that for all $u$, $v$ in $\CU$, $\|u-v\|=\sup_{g \in \CG} \langle g, u-v \rangle$.\\

2) We now prove Theorem \ref{thm1}.

Consider $g$ in $\CG$, $q_1$ and $q_2$ in $\CQ$. $\val(v. q_2,g) \geq \val(v,g)$ and $\val(u,g)\geq \val(q_1.u,g)$, so:
$\val(v,g)-\val(u,g)\leq \val(  v.q_2, g)-\val(q_1.u,g )\leq \| q_1.u-v. q_2\|.$ We first obtain: 
\[ \sup_{g \in \CG}\left( \val(v,g)-\val(u,g)\right)\leq \inf_{q_1\in \CQ, q_2\in \CQ}\|q_1.u- v.q_2\|.\] 
Clearly,  $ \sup_{g \in \CG(L)} \left(\val(v,g)-\val(u,g)\right)\leq  \sup_{g \in \CG}\left( \val(v,g)-\val(u,g)\right)$ and $\inf_{q_1\in \CQ, q_2\in \CQ}\|q_1.u- v.q_2\| \leq \inf_{q_1 \in \CQ(L),q_2\in \CQ(L)}\|q_1.u- v.q_2\|$. So it will be enough to prove  that 
\begin{equation}\label{eq1}\inf_{q_1 \in \CQ(L),q_2\in \CQ(L)}\|q_1.u- v.q_2\| \leq  \sup_{g \in \CG(L)} \left(\val(v,g)-\val(u,g)\right).\end{equation}
We have $ \inf_{q_1 \in \CQ(L),q_2\in \CQ(L)}\|q_1.u- v.q_2\| =    \inf_{q_1 \in \CQ(L),q_2\in \CQ(L)} \sup_{g \in \CG(L)} \langle g, v.q_2 -q_1.u \rangle.$
The sets $\CQ(L)$ and $\CG(L)$ are compact,  and by Sion's theorem :
\[  \inf_{q_1 \in \CQ(L),q_2\in \CQ(L)} \sup_{g \in \CG(L)} \langle g, v.q_2 -q_1.u \rangle=  \sup_{g \in \CG(L)} \inf_{q_1 \in \CQ(L),q_2\in \CQ(L)} \langle g, v.q_2 -q_1.u \rangle.\]
Inequality  (\ref{eq1}) now follows from :   \begin{eqnarray*}  \sup_{g \in \CG(L)} \inf_{q_1 \in \CQ(L),q_2\in \CQ(L)} \langle g, v.q_2 -q_1.u \rangle &= &  \sup_{g \in \CG(L)}\left(  \inf_{q_2 \in \CQ(L)} \langle g, v.q_2 \rangle  - \sup_{q_1 \in \CQ(L)} \langle g, q_1.u \rangle \right)\\
 &\leq & \sup_{g \in \CG(L)}   \left(\val(v,g)-\val(u,g)\right).
 \end{eqnarray*}
 Finally notice that the compactness of  $\CQ(L)$ and $\CG(L)$ also give that the above infima and suprema are achieved. \square
 
 \vspace{1cm}
 
\begin{remark} \rm Theorem \ref{thm1} and its proof also imply the followings.

1) For $u$, $v$ in $\CU(L)$, the sets $A=\{q_1.u-v.q_2, q_1\in \CQ, q_2\in \CQ\}$ and $B=\{q_1.v-u.q_2, q_1\in \CQ, q_2\in \CQ\}$  are polytopes in $\R^{K\times\{0,...,L-1\}^2}$, and  to compute  $d(u,v)$ it is enough to compute $\alpha=\Min \{\|x\|_1, x\in A\}$ and $\beta=\Min \{\|x\|_1, x\in B\}$. Then $d(u,v)=\max\{\alpha, \beta\}$.\\

2)    Relationship  between $d$, $\|.\|$ and $\succeq$: We have for all $u$, $v$ in 
$\CU$,  $$ \sup_{g \in \CG}\left(\val(v,g)-\val(u,g) \right)=\min_{u' \preceq u, v'\succeq v}\|u'- v'\|.$$\\
\indent 3) Optimal payoff structure :  If $u$, $v$ are in $\CU(L)$, $\sup_{g \in \CG}\left(\val(v,g)-\val(u,g) \right)$ is achieved for $g \in \CG(L)$ maximizing $\min_{q_1,q_2\in \CQ(L) } \langle g, v.q_2 -q_1.u \rangle.$ This shows how to find $g$ such that $d(u,v)  =  |\val(u,g)-\val(v,g)|.$\\

4)  Optimal strategies : Consider $u$, $v$ in $\CU(L)$, and  let $q_1$ and $q_2$ achieving the minimum in $\min_{q'_1\in \CQ(L), q'_2\in \CQ(L)} \| q'_1.u- v.q'_2 \|$. We have  $\| q_1.u- v.q_2 \|$  $=\sup_{g \in \CG}\left(\val(v,g)-\val(u,g) \right)\leq d(u,v)$.
Let  $g$ be  a payoff structure in $\CG$, there is a canonical way to transform optimal strategies   in the Bayesian game $\Gamma(v,g)$ into $2d(u,v)$-optimal strategies in   $\Gamma(u,g)$. Indeed let $\sigma$ in $\CQ$ be optimal for player 1 in $\Gamma(v,g)$, and define $\sigma.q_1$ in $\CQ$ by $\sigma.q_1(c) =\sum_{c'} q_1(c)(c') \sigma(c')$ for each signal $c$ : player 1 receives signal $c$, then selects $c'$ according to $q_1(c)$ and plays $\sigma(c')$.   Using the notations of the proof of theorem \ref{thm1}, we have for every strategy $\tau$ of player 2 in $\CQ$:
\begin{eqnarray*}
\gamma_{u,g} (\sigma.q_1, \tau) & =& \langle g, (\sigma.q_1).u.\tau \rangle \\
 &=& \langle g,  \sigma.(q_1.u).\tau \rangle \\
 & \geq &  \langle g,  \sigma.(v.q_2).\tau \rangle - \|q_1.u-v.q_2\|\\
 &\geq & \langle g,  \sigma.v.(\tau.q_2) \rangle - d(u,v)\\\
 & \geq & \val(v,g)-   d(u,v)\\
 &\geq & \val(u,g) -2 d(u,v),
 \end{eqnarray*}
 so $\sigma.q_1$ is $2d(u,v)$ optimal in $\Gamma(u,g)$. Similarly if   $\tau$ is optimal for player 2 in $\Gamma(u,g)$, then $\tau.q_2$ is $2d(u,v)$ optimal for player 2 in $\Gamma(v,g)$. \square
\end{remark}

\begin{example}\label{exa4} \rm   Consider for instance the following information structure $u_4$.

\begin{center}
  \begin{picture}(50,100)

\textcolor{blue}{ \put(20,60){\line(1,0){90}}}

\textcolor{blue}{ \qbezier(20,30)(55,15)(100,00)}

\textcolor{red}{ \qbezier(15,30) (15,30)(100,60) }

\textcolor{red}{ \put(10,00){\line(1,0){90}}}

\put(10, 30){\oval(15, 80)}
\put(100, 30){\oval(15, 80)}

\put(10,60){\circle*{5}}
\put(10,30){\circle*{5}}
\put(10,00){\circle*{5}}
\put(100,60){\circle*{5}}
\put(100,00){\circle*{5}}

 \put(0,80){$P1$}
  \put(90,80){$P2$}
\put(-10,60){$0$}
\put(-10,30){$1$}
\put(-10,00){$2$}
\put(110,60){$0$}
\put(110,00){$1$}

 \put(50,-20){$u_4$}
\end{picture}\end{center}

\vspace{0.5cm}

 How valuable is  $u_4$ to player 1, in which sense it is profitable for player 1 ? What are   $d(u_2,u_4)$  and $d(u'_2,u_4)$ ?  
 \begin{center}
 \begin{picture}(50,90)

\textcolor{blue}{\put(15,60){\line(1,0){90}}}

\textcolor{red}{ \qbezier(10,30) (10,30)(100,60) }

\put(10, 45){\oval(15, 50)}
\put(100, 45){\oval(15, 50)}

\put(10,60){\circle*{5}}
\put(10,30){\circle*{5}}
 
\put(100,60){\circle*{5}}
\put(100,30){\circle*{5}}

 \put(0,80){$P1$}
  \put(90,80){$P2$}

 \put(-10,60){$0$}
\put(-10,30){$1$}
 
\put(110,60){$0$}
\put(110,30){$1$}

 \put(50,00){$u_2$}
%
%
%
\end{picture} \hspace{5cm} 
\begin{picture}(50,100)
\textcolor{blue}{\put(10,50){\line(1,0){90}}}
\textcolor{red}{ \put(10,50){\line(3,-1){90}}}

\put(10, 35){\oval(15, 50)}
\put(100, 35){\oval(15, 50)}

\put(50,00){$u'_2$}

\put(10,50){\circle*{5}}
\put(10,20){\circle*{5}}
\put(100,50){\circle*{5}}
\put(100,20){\circle*{5}}
 
 \put(0,70){$P1$}
  \put(90,70){$P2$}
\put(-10,50){$0$}
\put(-10,20){$1$}
\put(110,50){$0$}
\put(110,20){$1$}
 
\end{picture}
\end{center}

 We first have   $\|u_2-u_4\|=1$, so $d(u_2,u_4)\leq 1$.  We have $u_2\succeq u_4$, hence $d(u_2,u_4)=\min_{q_1\in  \CQ, q_2\in \CQ}\|q_1.u_4-u_2.q_2\| .$ Define $q_1$ in $\CQ$   such that $q_10)=\delta_0$, $q_1(1)=q_1(2)=\delta_1$, and $q_2$ in $\CQ$ satisfying $q_2(0)=1/2\,  \delta_0 + 1/2 \, \delta_1$. The information structures $q_1.u_4$  and $u_2. q_2$ can be represented as follows:
 
\begin{picture}(100,100)

\textcolor{blue}{\put(20,50){\line(1,0){90}}}
\textcolor{red}{ \put(20,20){\line(1,0){90}}}

\textcolor{blue}{\qbezier(10,20)(50,10)(100,20)}
 
\textcolor{red}{ \put(10,20){\line(3,1){90}}}

\put(10, 35){\oval(15, 50)}
\put(100, 35){\oval(15, 50)}

\put(50,00){$q_1.u_4$}

\put(10,50){\circle*{5}}
\put(10,20){\circle*{5}}
\put(100,50){\circle*{5}}
\put(100,20){\circle*{5}}
 
 \put(0,70){$P1$}
  \put(90,70){$P2$}
\put(-10,50){$0$}
\put(-10,20){$1$}
\put(110,50){$0$}
\put(110,20){$1$}


\textcolor{blue}{\put(270,50){\line(1,0){90}}}
\textcolor{red}{ \put(270,20){\line(1,0){90}}}

\textcolor{blue}{\qbezier(260,50)(260,50)(350,20)}
 
\textcolor{red}{ \put(260,20){\line(3,1){90}}}

\put(260, 35){\oval(15, 50)}
\put(350, 35){\oval(15, 50)}

\put(300,00){$  u_2.  q_2$}

\put(260,50){\circle*{5}}
\put(260,20){\circle*{5}}
\put(350,50){\circle*{5}}
\put(350,20){\circle*{5}}
 
 \put(250,70){$P1$}
  \put(340,70){$P2$}
\put(240,50){$0$}
\put(240,20){$1$}
\put(360,50){$0$}
\put(360,20){$1$}
\end{picture}
 
\vspace{0,5cm}

\noindent Notice that $u_2.q_2 \sim u_2$, whereas $q_1.u_4 \preceq u_4.$   $\|q_1.u_4- u_2. q_2\|=1/2$, hence $d(u_2,u_4)\leq 1/2$. 

Consider now   the payoff  structure $g$  given by  $\left\{
\textcolor{blue}{  \left(\begin{array}{cc}
 0 & 1 \\
0 & -1 \\
\end{array}
\right) }\;, \; \textcolor{red}{\left(\begin{array}{cc}
 -1 & 0 \\
1 & 0 \\
\end{array}
\right)}\right\}. $ In the game $(u_2,g)$, it is optimal for player 1 to play Top if $0$ and Bottom if $1$, and $\val(u_1,g)=1/2$. In the game $(u_4,g)$ it is optimal for player 2 to play Left if $0$ and Right if $1$, and $\val(u_4,g)=0$. Consequently, $d(u_2,u_4)\geq 1/2$, and we  obtain 
$d(u_2,u_4)=1/2.$

Notice that $u'_2\sim u''_2$, with $u''_2$ obtained from $u_2$ by exchanging the signals 0 and 1  for each player, and $\|u_4-u''_2\|=1$. Considering the payoff structure given by $\left\{
\textcolor{blue}{  \left(\begin{array}{cc}
 -1 & 1 \\
-1 & 1 \\
\end{array}
\right) }\;, \; \textcolor{red}{\left(\begin{array}{cc}
1 & -1 \\
1 & -1 \\
\end{array}
\right)}\right\}$ gives   $d(u'_2,u_4)=1$, so $u_4$ is closer to $u_2$ than to $u'_2$. \end{example}

\begin{example} \label{exa5} \rm Maximal distance  with a given marginal on $K$. Consider  $p=(p_k)_{k \in K}$ in  $\Delta(K)$. 
   $$\max\{d(u,v), \marg_{\Delta(K)} (u)=\marg_{\Delta(K)} (v)=p\}=2 \; (1- \max_k p_k).$$

\indent{\it Proof} : Assume w.l.o.g. that $p_1= \max_k p_k.$  Define $u_{max}$ and $u_{min}$  in $\CU$ such that   $u_{max}(k,c,d)=p_k \1_{c=k}\1_{d=0}$ (complete information for player 1, trivial information for player 2) and $u_{min}(k,c,d)=p_k \1_{c=0}\1_{d=k}$ for all $(k,c,d)$ (trivial information  for player 1, complete information for player 2). Since the value of a zero-sum game is weakly increasing with player 1's information and weakly decreasing with player 2's information, we have $u_{min}\preceq u\preceq u_{max}$ and $u_{min}\preceq  v\preceq u_{max}$. It implies that $d(u,v)\leq  \|u_{max}-u_{\min}\|= 2 (1-p_1)$.

Define now the payoff structure $g$ such that $g(k,c,d)=\1_{k=c} -\1_{k \neq c}.$ Clearly, $\val(u_{max},g)=1$. In   the game $\Gamma(u_{min},g)$, it is optimal for player 1 to play $c=0$, and $\val(u_{min},g)=p_1-(1-p_1)=2 p_1-1$. Hence $\val(u_{max},g)-\val(u_{min},g)= 2 (1-p_1)$, and $d(u_{\max}, u_{\min})= 2 (1-p_1)$. \end{example}

\begin{example} \label{exa6} \rm  An example of convergence in the metric space ($\CU^*,d)$ :  

\vspace{1cm}

\begin{center}\setlength{\unitlength}{.3mm}
 
 \begin{picture}(350,120)

\textcolor{blue}{\put(30,60){\line(1,0){90}}}

\textcolor{blue}{\put(30,30){\line(1,0){90}}}

\textcolor{blue}{\put(30,120){\line(1,0){90}}}

\textcolor{blue}{\put(30,90){\line(1,0){90}}}

\textcolor{red}{ \qbezier(15,30) (15,30)(110,00) }

\textcolor{red}{ \qbezier(15,60) (15,60)(110,30)}

\textcolor{red}{ \qbezier(15,90) (15,90)(100,60) }

\textcolor{red}{ \qbezier(15,120) (32,114)(100,90)}

\put(10, 60){\oval(15, 130)}
\put(100, 60){\oval(15, 130)}

\put(10,60){\circle*{5}}
\put(10,30){\circle*{5}}
\put(10,90){\circle*{5}}
\put(10,120){\circle*{5}}
\put(100,90){\circle*{5}}
\put(100,120){\circle*{5}}
\put(100,00){\circle*{5}}
\put(100,60){\circle*{5}}
\put(100,30){\circle*{5}}

 \put(0,130){$P1$}
  \put(90,130){$P2$}

\put(-10,115){$0$}
\put(-10,90){$1$}
\put(-10,60){$...$}
\put(-10,30){$n$}

\put(110,0){$n+1$}
\put(110,60){$...$}
\put(110,30){$...$}
\put(110,90){$1$}
\put(110,120){$0$}

  \put(50, -20){$u_n$}
  
  \put(150,70){$ \xrightarrow[n \to \infty]{}$}
  
  \textcolor{blue}{\put(215,60){\line(1,0){90}}}
  
  \textcolor{red}{ \qbezier(210,60) (250,40)(300,60)}

    \put(250, 30){$u$}

\put(210, 60){\oval(15, 30)}
\put(300, 60){\oval(15, 30)}

\put(210,60){\circle*{5}}

\put(300,60){\circle*{5}}

 \put(200,80){$P1$}
  \put(290,80){$P2$}

 \put(190,60){$0$}

\put(310,60){$0$}

\end{picture}
\end{center}

\end{example}

\vspace{0.5cm}

The idea is that when $n$ is large, with high probability the players will receive signals  far from  0 and   $n$. These signals convey very little information to the players and only differ for very high-order beliefs. Optimal strategies of Bayesian games may   differ  after receiving one signal or another (as for $u_3$ in Example  \ref{exa3}), but if we restrict attention to   the values of the Bayesian games, $u_n$ is close to the trivial information structure $u$. 


We now prove the convergence. Consider garblings $q_1$, $q_2$, such that  $q_1(0)$ is uniform on $\{0,...,n\}$, and $q_2(c)=\delta_0$ for each $c$. Then $q_1.u=u_n.q_2$. We obtain $u\succeq u_n$, and $d(u,u_n)=\min_{q'_1,q'_2 \in \CQ} \|q'_1.u_n-u.q'_2\|$. Consider now $q'_1=q_2$ and $q'_2$ such that $q'_2(0)$ is uniform on $\{0,...,n+1\}$. We get $\|q'_1.u_n - u_nq'_2\| \leq 1/(n+1)  \xrightarrow[n \to \infty]{}0.$

\begin{remark} \label{rem2}  \rm Decision problems. Our approach can also be used for 1-player games or decision problems, with $\CU_0=\Delta_f(K \times \N)$, $\CG_0=\{g: K \times \N \to [-1,1],\; \exists L \;s.t. \forall i\geq L, \;g(k,i)=-1\}$, and $d_0(u,v)=\sup_{g \in CG_0} |\val(v,g)-\val(u,g)|$. We obtain for $u$, $v$ in $\CU_0$, that  
 $d_0(u,v)=\max \{\min_{q \in \CQ} \|q.u-v\|, \min_{q \in \CQ}\|q.v-u\|\}$ and  the Blackwell characterization : $u\succeq v \Leftrightarrow \exists q \in  Q, q.u=v$. 
 
Notice that what matters here for an information structure $u$ in $\CU_0$ is the induced law $\tilde{u}$ of the a posteriori of the player after receiving his signal. We also have, if $D$ is the set of suprema of affine  functions from $\Delta(K)$ to $[-1,1]$ and $E_1$ is the set of 1-Lipchitz  functions on $\Delta(K)$ :  
$d_0(u,v)=\sup_{f \in D}  \left|\int_{p\in \Delta(K)} f(p)d{\tilde{u}}(p) -  \int_{p\in \Delta(K)} f(p)d{\tilde{v}}(p)\right|$,  \begin{eqnarray*}
{\mathnormal{\rm and}}\;\;\; u_n  \xrightarrow[n \to \infty]{} u & \Longleftrightarrow & \forall f\in E_1, \int_{p\in \Delta(K)} f(p)d{u_n}(p)  \xrightarrow[n \to \infty]{}   \int_{p\in \Delta(K)} f(p)d{u}(p)\\
 &  \Longleftrightarrow & \left(\sup_{f \in E_1} \left( \int_{p\in \Delta(K)} f(p)d{u_n}(p) -  \int_{p\in \Delta(K)} f(p)d{u}(p)\right)  \xrightarrow[n \to \infty]{} 0\right)  \hspace{1.3cm}\square
 \end{eqnarray*} 
 

\end{remark}

\section{Links with the universal belief space}\label{sectionubs}

In the standard approach (Harsanyi, Mertens-Zamir),  a situation of incomplete information is described by a {\it state of the world}. A state of the world specifies the true state $k$, the belief of each player on $k$, the belief of each player on the belief of each player on $k$, etc... The set  of states of the world is the {\it universal belief space}  :
$$\Omega= K \times \Theta_1\times \Theta_2,$$
where for $i=1,2$, $\Theta_i$ is the {\it universal  type space} of player $i$, containing all the coherent belief hierarchies of this player. The type space of a player is  always endowed with the weak topology, and a crucial property is that $\Theta_i$ is  compact and homeomorphic to  the set   of Borel probabilities over $K \times \Theta_{-i}$. 

Any information  structure in $\CU$ naturally induces a Borel probability distribution over the universal belief space, which is consistent since we have a common prior and beliefs are derived by Bayes's rule. We denote by $\Pi$ the set of consistent (Borel) probabilities over the universal belief space, and by $\Pi_f$ the set of elements of $\Pi$ with finite support. We use the weak topology on $\Pi$ and $\Pi_f$, the space $\Pi$ is then compact and  $\Pi_f$ is dense in $\Pi$ (see corollary III.2.3 and theorem III.3.1 in \cite{mertens_sorin_zamir_2015}).    All elements of $\Pi_f$ are induced by some information structure in $\CU$, since given $P$ in 
$\Pi_f$ we can associate an   information structure $u$  in $\CU$ selecting  $(k,\theta_1, \theta_2)$ according to $P$ (formally,  $(k, f_1(\theta_1),f_2(\theta_2))$ in $K \times \N \times \N$, with  $f_1$ and $f_2$ being one-to-one). 

Given $P$ in $\Pi_f$ and $g$ in $\CG$, we can define $\val(P,g)$ as the value of the zero-sum Bayesian game where first: $(k, \theta_1, \theta_2)$ is selected according to $P$, then the players simultaneously select  $i$ and $j$ in $\N$, and the payoff to player 1 is $g(k,i,j)$. By Proposition III.4.4 in \cite{mertens_sorin_zamir_2015}, $\val(u,g)=\val(\Phi(u),g)$ and an optimal strategy in the game defined by $P$ and  $g$ induces an optimal strategy in the zero-sum game $\Gamma(u,g)$. Now, it is known that the value functions of finite games separate the elements of $\Pi$ (lemma 41 in Gossner Mertens \cite{gossner_value_2001}),  so   equivalent information structures in $\CU$ induce  the same element of $\Pi_f$, and we can associate to each equivalence class  in  $\CU^*$ an element  of $\Pi_f$. We obtain a natural bijection  from $\CU^*$ to $\Pi_f$, that we denote by $\Phi$, and one can ask  how similar the topological spaces $\CU^*$ and $\Pi_f$ are. \\

 In this section only, we will not  consider the distance $d$, but  the weak topology of pointwise convergence on $\CU$ and $\CU^*$. 
 
 \begin{definition}
 A sequence of information structures $(u_n)_{n\geq 1}$ weakly converges  to $u$ if for all payoff structures $g$ in $\CG$, $\val(u_n,g)\xrightarrow[n \to \infty]{}\val(u,g)$.
  \end{definition}
  Since the set of payoff structures can be seen as a  countable union of sets  of payoff matrices of a given size, one can find a sequence $g_1$,...,$g_n$,... of elements of $\CG$ 
 such that for each $g$ in $\CG$ and $\varepsilon>0$, there exists $n$ with $\max_{k\in K,(i,j)\in \N^2} |g(k,i,j)-g_n(k,i,j)|\leq \varepsilon$. The sequence $(g_n)$ is dense in $\CG$ for the sup norm, and  the weak convergence is metrizable by the metric: 
 $$d_W(u,v)=\sum_{n=1}^\infty \frac{1}{2^n} \left|val(u,g_n)-\val(v,g_n\right)|.$$
 $(\CU^*,d_W)$ is now another metric space, a priori different from $(\CU^*,d)$ since we have changed the metric. It can not be compact, since we have only considered information structures with finite support. 
 
 \begin{theorem} \label{thm2}  $\;$
 
1)  The metric space  $(\CU^*,d_W)$ is homeomorphic  to the space $\Pi_f$  of consistent   probabilities with finite support over the universal belief space.
 
 2) Its completion is homeomorphic to the compact space $\Pi$ of consistent   probabilities  over the universal belief space.

 \end{theorem}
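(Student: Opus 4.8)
The plan is to realize both statements through a single map. Fix the sequence $(g_n)_{n\ge 1}$ dense in $\CG$ chosen above, let $(H,\rho)$ be the compact (hence complete) metric space $H=\prod_{n\ge1}[-1,1]$ with $\rho(x,y)=\sum_{n\ge1}2^{-n}|x_n-y_n|$, and define $V:\Pi\to H$ by $V(P)=(\val(P,g_n))_{n\ge1}$. Here $\val(P,g)$ is the value of the zero-sum Bayesian game attached to $P$ and $g$, which is well defined for every $P\in\Pi$ (not only $P\in\Pi_f$) because for $g\in\CG(L)$ the game reduces to the finite action sets $\{0,\dots,L-1\}$ with a common prior, so a minmax theorem applies. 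I would prove that $V$ is a continuous injection; since $\Pi$ is compact this makes $V$ a homeomorphism onto the compact set $V(\Pi)$, and the two parts of the theorem then drop out of soft topology, as explained below.

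The hard part is the weak continuity of $P\mapsto\val(P,g)$ on $\Pi$ for fixed $g\in\CG(L)$. Given a \emph{continuous} strategy $\sigma:\Theta_1\to\Delta(\{0,\dots,L-1\})$ of player $1$, I would use the defining homeomorphism $\Theta_2\cong\Delta(K\times\Theta_1)$ together with consistency of $P$ to express player $2$'s pointwise best reply as $h_\sigma(\theta_2)=\min_j\int_{K\times\Theta_1}\sum_i\sigma_i(\theta_1)\,g(k,i,j)\,d\beta_2(\theta_2)$, where $\beta_2(\theta_2)$ is player $2$'s belief. As $\sigma$ is continuous, $g$ is bounded, and $\theta_2\mapsto\beta_2(\theta_2)$ is weakly continuous, the function $h_\sigma$ is bounded and continuous on $\Omega$ and depends only on $\theta_2$, and $\inf_\tau\gamma_{P,g}(\sigma,\tau)=\int_\Omega h_\sigma\,dP$. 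Hence $P\mapsto\sup_{\sigma}\int h_\sigma\,dP$ (supremum over continuous $\sigma$) is lower semicontinuous, and the symmetric construction from player $2$'s continuous strategies gives an upper semicontinuous function $P\mapsto\inf_\tau\int\tilde h_\tau\,dP$ lying above it. To identify both with $\val(P,g)$ I would invoke, first, the minmax theorem for existence of the value, and second, Lusin's theorem to approximate any measurable strategy by a continuous one off a set of arbitrarily small $P$-measure; since payoffs lie in $[-1,1]$ this changes the guarantee by an arbitrarily small amount, so continuous strategies are as good as measurable ones. This squeezes the lower and upper semicontinuous functions together, yielding continuity of $\val(\cdot,g)$ and thus of $V$.

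Injectivity of $V$ is then easy: since $g\mapsto\val(P,g)$ is $1$-Lipschitz for the sup norm, $V(P)=V(Q)$ forces $\val(P,g)=\val(Q,g)$ for all $g\in\CG$ by density of $(g_n)$, and the fact that the value functions of finite games separate the points of $\Pi$ (lemma 41 in Gossner--Mertens) gives $P=Q$. A continuous injection from a compact space into a Hausdorff space is a homeomorphism onto its image, so $V:\Pi\to V(\Pi)$ is a homeomorphism. For part 1, the identity $\val(u,g)=\val(\Phi(u),g)$ turns the definition of $d_W$ into $d_W(u,v)=\rho\big(V(\Phi(u)),V(\Phi(v))\big)$, so $V\circ\Phi$ is an isometry of $(\CU^*,d_W)$ onto $V(\Pi_f)$; composing with the homeomorphism $(V|_{\Pi_f})^{-1}$ shows that $\Phi:(\CU^*,d_W)\to\Pi_f$ is a homeomorphism.

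For part 2, the completion of $(\CU^*,d_W)$ is, via the isometry $V\circ\Phi$, the closure of $V(\Pi_f)$ inside the complete space $(H,\rho)$. Since $\Pi_f$ is dense in $\Pi$ and $V$ is continuous, $V(\Pi_f)$ is dense in the compact — hence closed — set $V(\Pi)$, so this closure equals $V(\Pi)$, which is homeomorphic to $\Pi$ through $V^{-1}$. I expect the continuity statement of the second paragraph to be the only real obstacle; everything after it is compactness, density, and the bookkeeping of an isometry.
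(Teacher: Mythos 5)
Your proposal is correct and follows essentially the same route as the paper: weak continuity of $P\mapsto\val(P,g)$ on $\Pi$, separation of points of $\Pi$ by values of finite games (Gossner--Mertens, Lemma 41), the isometry $\Phi$ between $(\CU^*,d_W)$ and $(\Pi_f,d^*_W)$, and compactness of $\Pi$ together with density of $\Pi_f$; your embedding $V$ into the Hilbert cube is just a repackaging of the paper's argument that $d^*_W$ metrizes the weak topology on $\Pi$. The only substantive difference is that you sketch a proof of the continuity of $\val(\cdot,g)$ via semicontinuity over continuous strategies plus Lusin's theorem, whereas the paper simply cites this result (Lemma 2 of Mertens 1986, Proposition III.4.3 of Mertens--Sorin--Zamir); your sketch is a sound rendering of that known argument.
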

 
 \noindent{\bf Proof of Theorem \ref{thm2}.}   
 
  Define, for $P$ and $Q$ in $\Pi$, $$d^*_W(P,Q)=\sum_{n=1}^\infty \frac{1}{2^n} \left|\val(P,g_n)-\val(Q,g_n)\right|.$$
 If for each $n$,  $\val(P,g_n)-\val(Q,g_n)=0$ then for all $g$ in $\CG$, $\val(P,g_n)-\val(Q,g_n)=0$,  and $P=Q$ by lemma 41 of   \cite{gossner_value_2001} again. $d^*_W$ is a metric on $\Pi$. 

 For each payoff structure $g$, the mapping $(P\mapsto \val(P,g))$ is continuous for the weak topology on $\Pi$ (see Lemma 2 in  \cite{mertens_repeated_1986} or  Proposition III.4.3. in \cite{mertens_sorin_zamir_2015}). So if a sequence $(P_t)_t$ of elements of $\Pi$ weakly converges to some limit $P$, we have $d^*_W(P_t,P)\xrightarrow[t\to \infty]{} 0$.
 
  Conversely, consider a sequence $(P_t)_t$ of elements of $\Pi$ converging for $d^*$ to some limit $P$, we have for all  $n$  : $\val(P_t,g_n)\xrightarrow[t\to \infty]{} \val(P,g_n)$. For any converging subsequence $(P_{\phi_t})_t$, for  the weak topology, with limit $Q$, we have by the previous paragraph, that for all $n$, $\val(P_{\phi_t},g_n)\xrightarrow[t\to \infty]{} \val(Q,g_n)$. So $d^*_W(P,Q)=0$ for each limit point $Q$, and since  $\Pi$ is compact the sequence $(P_t)_t$ converges to $P$. 
  
  We obtain that $d^*_W$ induces the weak topology on $\Pi$. By construction,  the  bijection $\Phi$ is isometric from $(\CU^*, d_W)$ to $(\Pi_f,d^*_W)$, hence an homeomorphism.
 
Finally, the completion of $(\CU^*, d_W)$  is homeomorphic to the completion of $(\Pi_f,d^*_W)$. Since $d^*_W$ induces the weak topology on $\Pi$, the completion of $(\Pi_f,d^*_W)$ is the closure of $\Pi_f$. Since  $\Pi$ is compact and $\Pi_f$ is dense in $\Pi$, this completion is $\Pi$. \square \\
 
Theorem 2 suggests a possible alternative construction of the set $\Pi$ of consistent probability over the universal belief space. The alternative construction is simply based on the values of finite zero-sum Bayesian games.  \\

In the remainder of the paper we  come back to the distance $d$ on $\CU^*$.

\section{How large is the space of information structures ?}

We consider the metric space $(\CU^*,d)$ (or simply $\CU^*$). As $\CU$ only contains information structures with finite support, $\CU^*$ can not be compact, 
and we denote by $\overline{\CU}$ its completion. We focus here on a major property : is $\overline{\CU}$ compact ? 
Equivalently, is $\CU^*$ totally bounded, i.e. given $\varepsilon>0$ can we cover $\CU^*$ with finitely many balls of radius $\varepsilon$ ? Can we see $\CU^*$ as a subset of a compact metric space ?

One can show that this question is equivalent to any of the following ones:

 A)  Is $\overline{\CU}$ homeomorphic to the set $\Pi$ of consistent probabilities over the universal belief space ?

B)  Are the distances $d$ and the weak distance $d_W$   uniformly equivalent on  $\CU^*$?

C)   Is the family  $(P \mapsto \val(P,g))_{g \in CG}$ an  equicontinous family of mappings from $\Pi$ to $\R$ ? \\

Question  C) corresponds to the second of the three  problems\footnote{Problem 1 asked for   the convergence of the value functions $(v_\lambda)_\lambda$ and $(v_n)_n$ in a  general zero-sum repeated game  with finitely many states, actions and signals, and was disproved during  the PhD thesis of B. Ziliotto  \cite{ziliotto_zero-sum_2016}. Problem 3 asks if the existence of a uniform value follows  from  the uniform convergence of $(v_\lambda)$, and was disproved by Lehrer and Monderer \cite{lehrer_discounting_1994}  for 1-player games, see also \cite{monderer_asymptotic_1993}.}
posed by J.F. Mertens in his Repeated Games survey from ICM 1986  \cite{mertens_repeated_1986} : ``This equicontinuity or Lispchitz property character is crucial in many papers...".

\begin{remark} \rm  Repeated Games. Consider a general   zero-sum repeated game (stochastic game, with incomplete information and signals), given by a  transition $q:K\times I\times J\longrightarrow \Delta(K \times C\times D)$,  a payoff function $g:K\times I\times J\longrightarrow [-1,1]$ and an initial probability $u_0$ in $\Delta(K \times A \times B)$, where $K$, $I$, $J$, $A$ and $B$ are finite subsets of $\N$.    Before  stage 1, an initial state $k_1$ in $K$ and initial private signals $a_1$ in $A$ for player 1, and $b_1$ in  $B$ for player 2, are selected  according to $u_0$.  Then at   each stage $t$,   simultaneously player 1 chooses an action $i_t$ in $I$ and player 2 chooses and action $j-t$ in $J$, and  :  the stage payoff is $g(k_t,i_t,j_t)$, an element $(k_{t+1},a_{t+1},b_{t+1})$ is selected according to $g(k_t,i_t,j_t)$, the new state is $k_{t+1}$, player 1 receives the signal $a_{t+1}$, player 2 the signal $b_{t+1}$, and the play proceeds to stage $t+1$. 
 
An appropriate state variable is here $u$ in $\CU$, representing the current state in $K$ and the finite sequence of signals previously received by each player. As a consequence, a recursive formula  can be explicitly written as follows:   for all discount $\lambda$ in $(0,1]$ and all $u$ in $\CU$, 
\begin{eqnarray*}v_\lambda(u) &=& \max_{q_1\in \CQ}  \min_{q_2\in \CQ}\;  \lambda  G(u,q_1,q_2)+(1-\lambda)  v_\lambda(F(u,q_1,q_2)),\\
&=&\min_{q_2\in \CQ} \max_{q_1\in \CQ}\;  \lambda G(u,q_1,q_2)+(1-\lambda) v_\lambda(F(u,q_1,q_2)),
\end{eqnarray*}
with $ G(u,q_1,q_2)=\sum_{k,c,d} u(k,c,d) g(k,q_1(c),q_2(d)) \in [-1,1]$, 
and $F(u,q_1,q_2)\in \CU$ is defined, for all $(k,i,a,j,b)$ in $K \times I \times A\times J\times B$, by  $F(u,q_1,q_2)(k', f_1(c,i,a),f_2(d,j,b))=$ $\sum_{k} u(k,c,d) q_1(c)(i)q_2(d)(j) q(k,i,j)(k',a,b)$ (where $f_1$ and $f_2$ are fixed one-to-one mappings from $\N^3$ to $\N$).

The value function $v_\lambda$ can be approximated by the value functions  of finite games. Since such value functions are, by construction, 1-Lipschitz   from $(\CU,d)$ to $[-1,1]$, so is $v_\lambda$.  Hence   the family $(v_\lambda)_\lambda$ is equicontinuous, and if it happens that the set of  information  structures that can be reached during the game  is totally bounded, by Ascoli's theorem this family has  a uniform limit point when $\lambda \to 0$. \square
 \end{remark}

Compactness of $\overline{\CU}$ is then strongly related to the equivalence between the strong distance $d$ and the weak distance $d_W$. Notice that in the 1-player case  of Remark \ref{rem2}, weak and strong convergence are equivalent, and ${\CU}_0$ is homeomorphic to   $\Delta_f(\Delta(K))$, which is dense in the compact set $\Delta(\Delta(K))$.
For 2 players,  compactness has been obtained in every particular case  tackled  so far. If $\CU'$ is a subset of $\CU^*$, we denote by $\overline{\CU'}$ the closure of $\CU'$ in  $\overline{\CU}$ : 

$\bullet$ Set  $\CU_1$ of  information structures where both players receive  the same signal:  $\overline{\CU_1}$ is compact, and homeomorphic to $\Delta(\Delta(K))$.  
  Here given $u$ in $\CU_1$, what matters is the induced law $\tilde{u}$ on the common a posteriori of the players on $K$. Another characterization of $d(u,v)$ has been obtained in  \cite{renault-venel}.  let $D_1$ be the subset of 1-Lipschitz functions from $\Delta(K)$ to $\R$ satisfying $ \forall p,q \in \Delta(K), \forall a, b \geq 0, \; a f(p)-b f(q)\leq \| a p-b q\|_1$. We  have :
    $$\forall u,v \in \CU_1,\;d(u,v)= \sup_{f \in D_1}  \left( \int_{p\in \Delta(K)} f(p)d{\tilde{u}}(p) -  \int_{p\in \Delta(K)} f(p)d{\tilde{v}}(p)\right).$$

$\bullet$ Set $\CU_2$   of information structures where player 1 knows the signal of player 2: $\overline{\CU_2}$ is compact, and homeomorphic to $\Delta(\Delta(\Delta(K)))$ (see \cite{mertens_repeated_1986}, \cite{gensbittel2014}). 

$\bullet$ Set $\CU_3$ of independent information structures : $\CU_3$ is the set of $u$ in $\CU$ such that $u(c,d | k)=u(c |k) u(d | k)$ (the signals $c$ and $d$ are conditionally independent given $k$). Here  $\overline{\CU_3}$ is homeomorphic to $\Delta(\Delta(K)\times \Delta(L)).$\\

We now present our main counterexample, where it is assumed that there are at least 2 states in $K$.

\begin{theorem}   \label{thm3}

There exists $\varepsilon>0$ and a sequence $(\mu^l)_{l \geq 1}$ of information   structures  in $\CU$  satisfying :

  {\it 1)} $d(\mu^l, \mu^p)>\varepsilon$ for all $l\neq p$,

 {\it 2)}  for each  $l$ the conditional law of  $\mu^{l+1}$ on the support of $\mu^l$ is $\mu^l$, and

  {\it 3)} for  all  $l>p$, the distribution on states and $2p$-order beliefs induced by $\mu^l$ does not depend on $l$.

\end{theorem}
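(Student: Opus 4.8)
The plan is to build an explicit, self-similar family $(\mu^l)$ refining the ladder structures of Examples \ref{exa3} and \ref{exa6}, and then to extract properties 2) and 3) from the recursive definition while deriving property 1) from the garbling/value characterization of Theorem \ref{thm1}. Fix two states, say $K=\{0,1\}$. I would define $\mu^{l}$ inductively so that $\mu^{l+1}$ is obtained from $\mu^{l}$ by grafting, at the current ``top'' of the belief hierarchy, one further coordinated layer of fresh signals (new integers for each player) joined by state-coloured edges, while leaving the already-present shallow signals untouched. Consistency of the grafting (a belief-martingale condition) is what makes 2) hold, and the placement of each fresh layer strictly above the previous ones in hierarchy-order is what will make 3) hold.

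Properties 2) and 3) are then the routine part. For 2), the grafting is arranged so that the fresh layer uses signal labels outside the support of $\mu^{l}$ and the relative weights on the old (shallow) signals are preserved; hence conditioning $\mu^{l+1}$ on the support of $\mu^{l}$ returns $\mu^{l}$ verbatim. For 3), I would set up the standard dictionary between ladder depth and the order of the induced hierarchy on the universal belief space: one fresh rung reveals one signal to each player and therefore advances the hierarchy by two orders, so the law of the states together with the beliefs of order at most $2p$ is measurable with respect to the shallow layers of depth at most $p$, which are identical across all $\mu^{l}$ with $l>p$. Writing the induced hierarchy explicitly, as in \cite{mertens_sorin_zamir_2015}, is mechanical.

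The content of the theorem is property 1). By Theorem \ref{thm1} and Corollary \ref{cor1} it suffices, for $l>p$, to lower bound $\Min_{q_1,q_2\in\CQ}\|q_1.\mu^{l}-\mu^{p}.q_2\|$ (or the symmetric quantity), equivalently to exhibit a single payoff structure $g_p$ with $\val(\mu^{l},g_p)-\val(\mu^{p},g_p)\ge\varepsilon$. Here a fixed game cannot work for all pairs: since $(\mu^l)$ converges in the weak topology (its hierarchies stabilise, by 3)) and $P\mapsto\val(P,g)$ is weakly continuous, $\val(\mu^l,g)$ converges for every fixed $g$. I would therefore take $g_p$ to be the depth-$p$ translate of one fixed matching-type game (player 1 rewarded for matching the state, player 2 for mismatching), tuned so that its value is decided by what a player can infer at the layer where $\mu^{p}$ has been truncated but $\mu^{l}$ has not; self-similarity then makes the resulting jump the \emph{same} constant $\varepsilon$ for every $p$, and letting $p$ range over all levels separates every pair $\mu^{l}\neq\mu^{p}$ by $\varepsilon$.

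The hard part --- and the whole point of the example --- is that properties 1) and 3) pull in opposite directions: property 3) forces the feature distinguishing $\mu^{l}$ from $\mu^{p}$ to sit at high hierarchy-order (beyond $2p$) and hence to carry little, rapidly decaying, probability weight, exactly the regime in which the naive ladder $u_n$ of Example \ref{exa6} was shown to \emph{collapse} to the trivial structure in the $d$-metric. The separation can survive only because finite-game values are not a continuous average of the data but can jump on small-probability, high-order events --- this is precisely the failure of equicontinuity that is question C) of the introduction. Concretely, the difficulty is to design the translated games $g_p$ (equivalently, to analyse the garbling minimisation of Theorem \ref{thm1}) so that the \emph{conditional} advantage created by the un-truncated layer is leveraged by player 1 into an \emph{unconditional} value gap bounded below independently of $p$ and $l$, rather than one proportional to the small weight of that layer. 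Establishing this uniform lower bound --- showing that no pair of garblings can erase the deep correlation present in $\mu^{l}$ but absent in $\mu^{p}$ at a cost below $\varepsilon$, uniformly in $l$ and $p$ --- is where essentially all the work lies; properties 2) and 3) are bookkeeping by comparison.
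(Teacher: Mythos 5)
Your framing is accurate --- properties 2) and 3) are indeed the bookkeeping, the whole difficulty is the uniform lower bound in 1), and you correctly diagnose the tension with the collapse of the naive ladder $u_n$ of Example \ref{exa6}. But the proposal stops exactly where the proof has to begin: it never supplies the mechanism that turns a small-probability, high-order difference into a value gap bounded below uniformly in $l$ and $p$. Two concrete problems. First, your candidate games --- ``depth-$p$ translates of one fixed matching-type game (player 1 rewarded for matching the state)'' --- cannot work: in any construction satisfying 3), the distribution of the state and of all low-order beliefs is the same for every $\mu^l$, so a game whose payoff hinges on guessing the state is decided by information common to all the structures and produces no gap. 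The paper's games $g^p$ are instead \emph{reporting} games: player 1 must report $p$ signals and player 2 only $p-1$, a small quadratic term pins down player 1's first report, and the payoff $\pm 5\varepsilon$ depends on whether the reported sequence lies in the support of the signal process and, if not, on \emph{which player's} report first left the support. The asymmetry $p$ versus $p-1$ is what makes $\val(u^l,g^{l+1})\le -\varepsilon$ (player 1 must guess a signal he never received) while $\val(u^l,g^p)\ge\varepsilon$ for $p\le l$ (truthful reporting is safe), and it is this sign flip, not a translate of one game, that separates every pair.

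Second, for truthful reporting to be robust one needs that any single misreport at any level gets ``caught'' (leaves the support) with probability close to $1/2$ at each subsequent step, uniformly over histories --- the conditions $UI1$, $UI2$ of Definition \ref{defUI} --- and a deterministic self-similar ladder with a bounded signal alphabet cannot have this property: that is precisely why $u_n$ collapses under garbling. The paper obtains it by taking the signal process to be a Markov chain on a huge alphabet $A=\{1,\dots,N\}$ whose transition supports are random $N/2$-subsets, and proving existence by the probabilistic method with Hoeffding-type concentration (Lemmas \ref{lemT1}--\ref{lembound}). Then a backward induction over report levels, using the $UI$ bounds with $\alpha=1/25$, converts the per-level $1/2$-detection probability into the uniform bounds $\val(u^l,g^p)\ge\varepsilon$ and $\val(u^l,g^{l+1})\le-\varepsilon$ of Proposition \ref{pro2}. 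None of these ingredients --- the reporting payoffs keyed to support membership, the pseudo-random large-alphabet chain, the existence argument, and the backward induction giving uniformity --- appears in your proposal, so as written it is a correct statement of the problem rather than a proof.
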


\noindent{\bf Remarks : }

Condition {\it 1)}  implies that $(\CU^*,d)$ is not totally bounded, 
and $\overline{\CU}$ is not compact. The space  of information structures $\CU^*$  is very large, in the sense that it is   {\it not} a subset of a compact metric space, one cannot approximate the space with finite sets.  All questions A), B), C) above have a negative answer, in particular $\overline{\CU}$ is not homeomorphic to $\Pi$.  
 
Condition {\it 2)} means that to  go from $\mu^l$ to $\mu^{l+1}$, each player gets an extra signal. So having more and more information may lead... nowhere. This has to be contrasted with the 1-player case, where the sequence of beliefs of a player receiving more and more signals is a martingale, which converges  in law. We don't have a ``strategic martingale" convergence theorem here.

Condition {\it 3)}  implies there exists no $n$ such that knowing the joint distribution of $n$-order beliefs is enough to determine, up to $\varepsilon$, the value of every  finite  game with payoffs in $[-1,1]$.

Computing the largest $\varepsilon$ such that a sequence satisfying  condition  {\it 1)} exists seems very difficult, but we believe it is very small.  Rough estimates of our proof only gives $\varepsilon \geq 3. 10^{-17}$. \\

\section{Proof of theorem \ref{thm3}.}
Without loss of generality we assume that there are two states: $K=\{0,1\}$. For convenience we will consider information structures $u$ in $\Delta(K \times \CC \times \CD)$ where $\CC$ and $\CD$ are arbitrary finite sets (which can be easily identified with subsets of $\N$). Similarly, we will consider game structures $g: K \times \CC \times \CD\to [-1,1]$, where 
$\CC$ and $\CD$ are the respective finite sets of actions of player 1 and player 2. 

 $N$ is  a very large even integer to be fixed later, and we   write $A=C=D=\{1,...,N\}$, with the idea of using $C$ while speaking of actions or signals of player 1, and using $D$ while speaking of actions and signals of player 2. We fix $\varepsilon$ and $\alpha$, to be used later,  such that $$0<\varepsilon<\frac{1}{10(N+1)^2}, \; {\rm and }\; \alpha=\frac{1}{25}.$$

We will consider a   Markov chain with law $\nu$ on $A$, satisfying: 

$\bullet$  the law of the first state of the Markov chain is uniform on $A$,

$\bullet$  for each  $a$ in $A$, there are exactly  $N/2$ elements $b$ in $A$ such that $\nu(b|a)=2/N$ : given that the current state of the  Markov chain is $a$, the law of the next state is uniform on a subset of states of size $N/2$, 

$\bullet$ and two more conditions, called  $UI1$ and $UI2$, to be be defined later. 

A sequence $(a_1,...,a_l)$ of length $l\geq 1$ is said to be {\it nice}  it it is in the support of the Markov chain: $\nu(a_1,...,a_l)>0$. For instance  any sequence of length 1 is nice, and $N^2/2$ sequences of length 2 are nice.  The proof is now split in 3 parts: we first define the information structures $(u^l)_{l\geq 1}$ and some  payoff  structures $(g^p)_{p\geq 1}$. Then we define the conditions $UI1$ and $UI2$ and show that they imply the conclusions of theorem \ref{thm3}. Finally, we show, via the probabilistic method,  the existence of a Markov chain $\nu$ satisfying all  our  conditions.

\subsection{Information and payoff  structures $(u^l)_{l\geq 1}$ and $(g^p)_{p\geq 1}$.}

\begin{definition} For $l\geq 1$, define the information structure $u^l\in \Delta(K \times C^l \times D^l)$   by: for each state $k$ in $K$, signal  $c=(c_1,...,c_l)$ in $C^l$ of player 1 and signal $d=(d_1,...,d_l)$ in $D^l$  for player 2, 
\begin{equation*}
u^{l}(k,c,d)= \nu(c_1,d_1,c_2,d_2,...,c_l,d_l)  \left( \frac{c_{1}}{N+1}  \mathbf{
1}_{k=1}+\frac{1-c_{1}}{N+1}   \mathbf{1}_{k=0} \right).
\end{equation*}
\end{definition}
\noindent The following interpretation of $u^l$ holds: first select $(a_1,a_2,...,a_{2l})=(c_1,d_1,...,c_l,d_l)$ in $A^{2l}$ according to the Markov chain $\nu$ (i.e. uniformly among the nice sequences of length $2l$), then tell  $(c_1,c_2,...,c_l)$ (the elements of the sequence with odd indices) to player 1, and $(d_1,d_2,...,d_l)$  (the elements of the sequence with even  indices)  to player 2. Finally choose the state $k=1$ with probability $c_1/(N+1)$, and state $k=0$ with  the complement probability $1-c_1/(N+1)$.

Notice that the definition is not symmetric among players,  the first signal $c_1$ of player 1 is uniformly distributed and plays a particular role.   The marginal of $u^l$ on $K$ is uniform, and the marginal of $u^{l+1}$ over $(K \times C^l \times V^l)$ holds : condition ${\it 2)}$ of   theorem \ref{thm3} is satisfied.

We now show that condition ${\it 3)}$ of the theorem holds. Recall that $n $-order
beliefs are defined inductively as conditional laws. Precisely, the first
order beliefs $\theta _{1}^{i}$ of player $i$ is the conditional law of $%
k$ given her signal. The $n$-order belief $\theta _{n}^{i}$ of player $%
i$ is the conditional law of $(\omega ,\theta _{n-1}^{-i})$ given her
signal. In this construction, conditional laws are seen as random variables
taking values in space of probability measures.

\begin{lemma}
\label{lem_beliefs} For all $l>p$, the joint distribution of $%
(\omega,\theta^1_{2p},\theta^2_{2p})$ induced by the information structure $u^l$  is
independent of $l$.
\end{lemma}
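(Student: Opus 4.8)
The plan is to exploit two structural features of $u^l$. First, the signals $(c_1,d_1,c_2,d_2,\dots)$ are the successive states of the Markov chain $\nu$, with player $1$ observing the odd-indexed coordinates $c_1,\dots,c_l$ and player $2$ the even-indexed ones $d_1,\dots,d_l$. Second, the state $k$ is produced from $c_1$ alone, by an independent coin with $P(k=1\mid \text{sequence})=c_1/(N+1)$, so that $k$ is conditionally independent of all other signals given $c_1$. I would first record the consequences of the Markov property that drive everything: conditioning on player $1$'s signals leaves $k$ a function of the observed $c_1$, so $\theta^1_1=c_1/(N+1)$; and conditioning on all odd positions makes each $d_j$ depend only on its neighbours $c_j,c_{j+1}$, through kernels determined by $\nu$ and hence independent of $l$. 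Symmetrically, conditioning on all even positions makes each $c_j$ depend only on $d_{j-1},d_j$, with $c_1$ depending on $d_1$ alone (since, by the Markov property, given $d_1$ the past $c_1$ is independent of $d_2,\dots,d_l$); in particular $\theta^2_1$ is a function of $d_1$.

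The heart of the argument is an induction on the belief order establishing a \emph{locality} property: there are integers $m_1(n),m_2(n)$, independent of $l$, and fixed measurable maps such that $\theta^1_n$ is that map applied to $(c_1,\dots,c_{m_1(n)})$ and $\theta^2_n$ is that map applied to $(d_1,\dots,d_{m_2(n)})$, for every $l\ge m_i(n)$. The base case is $m_1(1)=m_2(1)=1$, computed above. For the step, I would write $\theta^1_n$ as the conditional law of $(k,\theta^2_{n-1})$ given player $1$'s signals; using the inductive hypothesis that $\theta^2_{n-1}$ is a fixed function of $(d_1,\dots,d_{m_2(n-1)})$ together with the conditional-independence facts, the conditional law of $\bigl(k,d_1,\dots,d_{m_2(n-1)}\bigr)$ given $(c_1,\dots,c_l)$ factorizes through $(c_1,\dots,c_{m_2(n-1)+1})$ by kernels not depending on $l$, giving $m_1(n)=m_2(n-1)+1$; symmetrically $m_2(n)=m_1(n-1)$. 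The point I would verify with care is that not merely the \emph{window} but the \emph{functional form} of the belief maps is independent of $l$ --- this holds precisely because every conditional law entering the recursion is assembled from the fixed Markov kernels of $\nu$, and I expect this bookkeeping (treating the beliefs as random variables valued in spaces of measures and justifying the factorizations) to be the only real work, the conceptual content being entirely the conditional independence from the Markov structure.

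Solving the recursion from $m_1(1)=m_2(1)=1$ yields $m_1(n)=\lfloor n/2\rfloor+1$ and $m_2(n)=\lceil n/2\rceil$, so that $m_1(2p)=p+1$ and $m_2(2p)=p$. Consequently, for every $l>p$ the triple $(k,\theta^1_{2p},\theta^2_{2p})$ is a fixed, $l$-independent measurable image of $(k,c_1,\dots,c_{p+1},d_1,\dots,d_p)$, i.e.\ of the coin for $k$ together with the first $2p+1$ coordinates $(a_1,\dots,a_{2p+1})$ of the Markov chain. Since the joint law of these first $2p+1$ coordinates (uniform initial law, transition $\nu$) and of the coin is manifestly the same for all $l\ge p+1$, the induced joint distribution of $(k,\theta^1_{2p},\theta^2_{2p})$ does not depend on $l$, which is exactly the assertion of the lemma.
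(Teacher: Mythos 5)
Your proposal is correct and follows essentially the same route as the paper: both arguments rest on the observation that $(k,c_1,d_1,\dots,c_l,d_l)$ is a Markov chain with $l$-independent kernels, and both run an induction on the belief order showing that $\theta^1_n$ and $\theta^2_n$ depend only on a fixed window of signals (your $m_1(2p)=p+1$, $m_2(2p)=p$ matches the paper's $\min(l,n+1)$ and $\min(l,n)$ exactly). Your explicit remark that the functional form, and not merely the window, is $l$-independent is a point the paper leaves implicit, but it is the same proof.
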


\begin{proof}
We use the notation ${%
\mathcal{L}}(X|Y)$ for the conditional law of $X$ given $Y$,  and the
identification $(a_{1},...,a_{2l})=(c_{1},d_{1},....,c_{l},d_{l})$. At first, note that by construction $k$ and $(a_2,....,a_{2l})$ are
conditionally independent given $a_1$, so that the sequence $%
(k,a_1,a_2,...,a_{2l})$ is a Markov process. It follows that 
$\theta^1_1 = {\mathcal{L}}( k | c_1,...,c_l)$ =$ {\mathcal{L}}(k
|c_1).$
The Markov property implies that 
\begin{equation*}
\theta^2_1={\mathcal{L}}(k| d_1,....,d_l)= {\mathcal{L}}(k|d_1),
\; \theta^2_2={\mathcal{L}}(d, \theta^1_1(c_1) | d_1,....,d_l)= {%
\mathcal{L}}(k, \theta^1_1(c_1)|d_1),
\end{equation*}
and therefore we have 
$$
\theta^1_2= {\mathcal{L}}(k, \theta^2_1(d_1) | c_1,....,c_l)= {\mathcal{%
L}}(k, \theta^2_1(d_1)|c_1,c_2).
$$
By induction, and applying the same argument (future and past of a Markov
process are conditionally independent given the current position), we deduce
that for all $n\geq 1$, 
\begin{equation*}
\theta^1_{2n}= {\mathcal{L}}(k,\theta^{2}_{2n-1}|
c_1,....,c_{\min(l,n+1)}), \;\;\;
\theta^1_{2n+1}= {\mathcal{L}}(k,\theta^{2}_{2n}|
c_1,....,c_{\min(l,n+1)}),
\end{equation*}
\begin{equation*}
\theta^2_{2n-1}= {\mathcal{L}}(k,\theta^{1}_{2n-2}|
d_1,....,d_{\min(l,n)}), \;\;\;
\theta^2_{2n}= {\mathcal{L}}(k,\theta^{1}_{2n-1}|
d_1,....,d_{\min(l,n)}).
\end{equation*}
As a consequence, for all $n\leq p$, these conditional laws do not depend
on which $u^l$  we are using as soon as $l>p$.\end{proof}

 Let us give already a very rough intuition of the conditions $UI1$ and $UI2$ and the Bayesian games that we will consider. The players will be asked to report their signals, and payoffs will highly depend on whether the reported sequence is nice or not. And, thanks to the conditions $UI1$ and $UI2$,  the chain will be such that if $(c_1,d_1,...,c_l,d_l)$ is selected according to $\nu$ and player 2 only knows $(d_1,...,d_l)$, any deviation of player 2  to some $(d_1,...,d_{r-1},d_r,...,d'_l)$, with $d'_r\neq d_r$,    will satisfy:
 
\begin{center}\begin{tabular}{lcc}
$\nu\left( (c_1,d_1,...,c_r,d'_r)\; {\mathnormal {\rm is}} \;  {\mathnormal  {\rm nice}} \right)$ &$\simeq$ &1/2,\\
$ \nu\left( (c_1,d_1,...,c_r,d'_r,c_{r+1})\; {\mathnormal {\rm is}} \;  {\mathnormal  {\rm nice}} \right) $&$\simeq$& 1/4,\\
 $  \nu\left( (c_1,d_1,...,c_r,d'_r,c_{r+1},d'_{r+1})\; {\mathnormal {\rm is}} \;  {\mathnormal  {\rm nice}} \right)$ &$\simeq$ &1/8,
   \end{tabular}\end{center}
   
  etc...,   and similar conditions for deviations of  player 1. 
  
    \begin{definition}  
Consider  a sequence $(a_1,...,a_l)$ of elements of $A$ which is not nice,  i.e. such that   $\nu(a_1,...,a_l)=0$. We say that the sequence is not nice because of player 1 if $\min\{t\in \{1,...,l\}, \nu(a_1,...,a_t)=0\}$ is odd, and not nice because of player 2 if $\min\{t\in \{1,...,l\}, \nu(a_1,...,a_t)=0\}$ is even. 
      \end{definition}
      A sequence $(a_1,...,a_l)$ is now either nice, or not nice because  of player 1, or not nice because of player 2. A  sequence of length  2 is either nice, or not nice because of player 2. 
      
  \begin{definition} For $p\geq 1$, define the payoff structure $g^p: K \times C^p \times D^{p-1}\to [-1,1]$ such that  for all $k$ in $K$, $c'=(c'_1,...,c'_p)$ in $C^p$, 
  $d'=(d'_1,...,d'_{p-1})$ in $D^{p-1}$ : 
\begin{eqnarray*} g^p(k,c',d')& =&g_0(k,c'_1)+h^p(c',d'), \;\;{\mathnormal with}\\
g_0(k,c'_1) & = &  - {\left( k- \frac{u'_1}{N+1}\right)}^2 + \frac{N+2}{6(N+1)}, \;   \\
   h^p(c',d')& =&\left\{ \begin{array} {ccl} 
\varepsilon & \mbox{if} & (c'_1,d'_1,..., c'_p) \;{is} \; {\it nice},\\
   5\varepsilon &  \mbox{if} &  (c'_1,d'_1,..., c'_p) \;{is}\; not \;  {\it nice} \;  because\;  of \; player \;2,   \\
-5  \varepsilon    &  \mbox{if} & (c'_1,d'_1,..., c'_p)\;{is} \; not \;  {\it nice}  \;  because\;  of \; player \;1.
    \end{array}\right.
\end{eqnarray*}
    \end{definition}
 
One can check that $|g^p|\leq 5/6+5 \varepsilon\leq 8/9$. Regarding the $g_0$ part of the payoff, consider a decision problem for player 1 where: $c_1$ is selected uniformly in $A$ and   the state is selected to be  $k=1$  with probability $c_1/(N+1)$ and $k=0$  with probability $1-c_1/(N+1)$. Player 1  observes $c_1$ but not $k$, and he choose $c'_1$ in $A$ and receive  payoff $g_0(k,c'_1)$.  We have $\frac{c_1}{N+1} g_0(1,c'_1)+ (1-\frac{c_1}{N+1}) g_0(0,c'_1)$ $=$ $ \frac{1}{(N+1)^2}( c'_1(2c_1-c'_1)+ (N+1)((N+2)/6-c_1))$.  To maximize this  expected payoff, it is well known that player 1 should play his belief on $k$, i.e. $c'_1=c_1$. Moreover,   if player 1 chooses $c'_1\neq c_1$, its expected loss from not having chosen $c_1$ is at least $\frac{1}{(N+1)^2}\geq 10 \varepsilon$. And the constant $\frac{N+2}{6(N+1)}$ has been chosen such that the value of this decision problem is 0.

    Consider now $l\geq 1$ and $p\geq 1$. By definition, the Bayesian game $\Gamma(u^k,g^p)$ is played as follows: first, $(c_1,d_1,...,c_l,d_l)$ is selected according to the law $\nu$ of the Markov chain, player 1 learns $(c_1,...,c_l)$, player 2 learns $(d_1,...,d_l)$   and the state is  $k=1$ with probability $c_1/(N+1)$ and $k=0$  otherwise. Then {\it simultaneously}  player 1 chooses  $c'$ in $C^p$ and player 2 chooses $d'$ in $D^{p-1}$, and finally the payoff to player 1 is $g^p(k,c',d')$. Notice that by the previous paragraph about $g_0$, it is always strictly  dominant for player 1 to report correctly his first signal, i.e. to choose $c'_1=c_1$. We will show in the next section that  if $l\geq p$ and player 1   simply plays the sequence of signals he received, player 2 can not do better than also reporting truthfully  his own signals, leading to a value not lower than the payoff for nice sequences, that is  $\varepsilon$. On the contrary in the game $\Gamma(u^l,g^{l+1})$, player 1   has to report not only the $l$ signals he has received, but also an extra-signal $c'_{l+1}$ that he has to guess. In this game we will prove that  if player 2   truthfully reports  his own signals, player 1 will incur the payoff $-5\varepsilon$ with probability at least (approximately) 1/2, and this will result in a low value. These intuitions will prove correct in the next section, under some conditions $UI1$ and $UI2$.
    
    \subsection{Conditions UI and values} 
    
    To prove that the intuitions of the previous paragraph are correct, we need to ensure that  players
have incentives to report their true signals, so we need additional assumptions on the Markov chain. \\

\noindent{\bf Notations and definition:} Let $l\geq 1$, $m\geq 0$, $c=(c_1,...,c_l)$ in $C^l$ and $d=(d_1,...,d_m)$ in $D^m$. We write :

\centerline{\begin{tabular}{lcll}
$a^{2q}(c,d)$ & = &$(c_{1},d_{1},....,c_{q},d_{q})\in A^{2q}$  &{for each }
$q\leq \min \{l,m\}$, \\
$a^{2q+1}(c,d)$& = &$(c_{1},d_{1},...., c_{q},d_{q},c_{q+1})\in A^{2q+1} $ & { for
each }$q\leq \min \{l-1,m\}$.
\end{tabular}}

\noindent For $r\leq \min \{2l,2m+1\}$, 

\centerline{we say that $c$  and $d$ are \emph{nice
at level }$r$, and we write $c\smile _{r}d,$ if $a^{r}(c,d)$ is nice.}  


\vspace{0,5cm}

In the next definition we consider an information structure $u^l\in \Delta(K \times C^l\times D^l)$ and denote by ${\tilde c}$ and ${\tilde d}$ the respective random variables of the signals of player 1 and 2. 

\begin{definition}$\;$ \label{defUI}

We say that the conditions $UI1$ are satisfied if  for all $l\geq 1$,  
all ${c}=({c}_{1},...,{c}_{l})$ in $C^{l}$ and $c^{\prime
}=(c_{1}^{\prime },...,c_{l+1}^{\prime })$ in $C^{l+1}$  such that $%
{c}_{1}=c_{1}^{\prime }$, we have 
\begin{equation} \label{eq61}
  u^l \left( c^{\prime }\smile _{2l+1}\tilde{d}\;  \big| \; \tilde{c}={c},c^{\prime
}\smile _{2l}\tilde{d}\right)  \in [1/2-\alpha,1/2+\alpha]
\end{equation}%
and  for all $m\in \{1,...,l\}$ such that ${c}%
_{m}\neq c_{m}^{\prime }$, for $r=2m-2,2m-1$,
\begin{equation}\label{eq62}
 u^l \left(c^{\prime }\smile _{r+1}\tilde{d} \; \big| \; \tilde{c}=c,c^{\prime
}\smile _{r} \tilde{d}\right)  \in [1/2-\alpha,1/2+\alpha].
\end{equation}%

We say that the conditions $UI2$ are satisfied if  for all $1\leq p\leq l$,
for all ${d}\in D^{l}$, for all $d^{\prime }\in D^{p-1}$, for all $m\in
\{1,...,p-1\}$ such that ${d}_{m}\neq d_{m}^{\prime }$, for $r=2m-1,2m$ 
\begin{equation}\label{eq63}
u^l\left( \tilde{c}\smile _{r+1}d^{\prime }|\tilde{d}={d},\tilde{c}\smile
_{r}d^{\prime }\right) \in [1/2-\alpha,1/2+\alpha].
\end{equation}
\end{definition}
To understand the conditions $UI1$, consider the Bayesian game $\Gamma(u^l, g^{l+1})$, and assume that player 2 truthfully reports his sequence of signals and  that player 1 has received the signals $(c_1,...,c_l)$ in $C^l$.   (\ref{eq61}) states that if  the sequence of reported signals $(c'_1,\tilde{d}_1,...,c'_l,\tilde{d}_l)$ is nice at level $2l$, then whatever the last reported signal $c'_{l+1}$, the conditional  probability that $(c'_1,\tilde{d}_1,...,c'_l,\tilde{d}_l, c'_{l+1})$ is still nice is in $[1/2-\alpha, 1/2 + \alpha]$, i.e.  close to 1/2. Regarding  (\ref{eq62}), first notice that if $c'=c$, then by construction  $(c'_1,\tilde{d}_1,...,c'_l,\tilde{d}_l)$ is nice and $ u^l \left( c^{\prime }\smile _{r+1}\tilde{d} \; \big| \; \tilde{c}=c,c^{\prime
}\smile _{r} \tilde{d}\right) = u^l \left( c \smile _{r+1}\tilde{d} \; \big| \; \tilde{c}=c\right)=1$ for each $r=1,...,2l-1$. Assume now that for some $m=1,...,l$, player 1 misreports his $m^{th}$-signal, i.e.  reports $c'_m\neq c_m$. (\ref{eq62}) requires that given that the reported signals were nice so far (at level $2m-2$), the conditional probability that the reported signals are not nice at level $2m-1$ (integrating $c'_m$)  is close to 1/2, and moreover if the reported signals are nice at this level $2m-1$, adding the next signal $\tilde{d}_{m}$ of player 2 has probability close to 1/2 to keep the reported sequence nice. Conditions $UI2$  have a similar interpretation, considering the Bayesian games $\Gamma(u^l,g^p)$ for $p\leq l$, assuming that player 1 reports truthfully his signals and that player 2 plays $d'$ after having received the signals $d$. 

\begin{proposition} \label{pro2} Conditions $UI1$ and $UI2$ imply : 
\begin{eqnarray} \forall l\geq 1, \forall p\in \{1,...,l\}, & \val(u^l,g^p)\geq \varepsilon. \label{eq64} \\
\forall l \geq 1,  &  \val(u^l,g^{l+1})\leq -\varepsilon. \label{eq66} \end{eqnarray}
\end{proposition}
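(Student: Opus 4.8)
The plan is to prove each of the two inequalities by fixing the honest ``report-your-true-signals'' strategy for one player and bounding every (possibly adaptive) pure reply of the other. For (\ref{eq64}) player~1 is the honest one, for (\ref{eq66}) player~2 is. In both cases the payoff splits as $g^p=g_0+h^p$, and the $g_0$ part is disposed of first: reporting $c'_1\neq c_1$ costs player~1 at least $1/(N+1)^2>10\varepsilon$ in conditional expectation while $h^p$ ranges over an interval of length $10\varepsilon$, so reporting the true first signal is strictly dominant and we may assume $c'_1=c_1$, whence $\E[g_0]=0$. It then remains to control $\E[h^p]$. Writing $T$ for the first level at which the reported alternating sequence stops being nice, we have $h^p=\varepsilon$ if $T=\infty$, $h^p=5\varepsilon$ if $T$ is even (not nice because of player~2) and $h^p=-5\varepsilon$ if $T$ is odd (not nice because of player~1), so that $\E[h^p]/\varepsilon = 1 + 4\,P(T\text{ even}) - 6\,P(T\text{ odd})$.

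The engine of the argument is a re-synchronization property coming from the Markov structure of $\nu$. Fixing the honest player's realized signal and conditioning on it, decompose the opponent's report by its successive lie positions $m_1<m_2<\cdots$. Up to the first lie the reported sequence coincides with a genuine path of the chain, hence is nice. At a lie position the conditions $UI1$/$UI2$, conditioned precisely on the nice history as in (\ref{eq61})--(\ref{eq63}), say that the misreported coordinate keeps the sequence nice with probability in $[1/2-\alpha,1/2+\alpha]$; if it does not, $T$ lands on that lie position. If it does, the next coordinate is a \emph{true} signal of the honest player, which by the same conditions either fails --- putting $T$ on the honest player's parity --- or re-synchronizes the sequence onto a genuine Markov path. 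Once re-synchronized, the Markov property (the one used in Lemma~\ref{lem_beliefs}) makes every subsequent honest transition automatically nice until the next lie. Thus each lie produces a clean three-way branching with probabilities $(u_i,v_i,w_i)\approx(1/2,1/4,1/4)$ for (fail at the lie / fail at the next honest coordinate / re-synchronize), and only the re-synchronized mass $\prod_{j<i}w_j$ reaches the next lie.

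For (\ref{eq64}) the liar is player~2, so ``fail at the lie'' is an even-$T$ event and ``fail at the next coordinate'' an odd-$T$ event; each lie contributes $4u_i-6v_i$ to $\E[h^p]/\varepsilon-1$. Writing $u_i=1-s$ and $v_i=s(1-s')$ one gets $4u_i-6v_i=4-10s+6ss'\geq \tfrac12-10\alpha-6\alpha^2$ for $s,s'\in[1/2-\alpha,1/2+\alpha]$, which is positive exactly because $\alpha=1/25$; summing the nonnegatively weighted contributions yields $\E[h^p]\geq\varepsilon$. For (\ref{eq66}) the liar is player~1, the parities are reversed, and player~2's honesty additionally forces player~1 to submit the \emph{unanchored} extra coordinate $c'_{l+1}$: by (\ref{eq61}) this coordinate fails --- always an odd-$T$, because-of-player-1 event --- with probability close to $1/2$ whenever it is reached on a genuine path. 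Here each of player~1's lies contributes $3u_i-2v_i=3-5s+2ss'\geq 1-5\alpha-2\alpha^2>0$ to $3\,P(T\text{ odd})-2\,P(T\text{ even})$, while the terminal extra coordinate adds a further $\approx 3(1/2-\alpha)$ on the surviving mass; combining these gives $3\,P(T\text{ odd})-2\,P(T\text{ even})\geq1$, i.e. $\E[h^{l+1}]\leq-\varepsilon$.

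The routine parts are the $g_0$ dominance and the reduction to the parity of $T$; the real work, and the main obstacle, is the constant bookkeeping. Because each branching contributes only marginally with the correct sign, one must (i) make the re-synchronization decomposition rigorous for an \emph{arbitrary} adaptive deviation, checking that the $UI$ conditions genuinely apply along the conditioned nice history at every lie and that surviving mass is correctly carried forward; and (ii) push the $\alpha$-perturbations through the worst case of the branching probabilities and of the geometric re-synchronization weights, verifying that both target inequalities survive at $\alpha=1/25$. The tightness of these margins is exactly what forces $\varepsilon$ to be taken extremely small.
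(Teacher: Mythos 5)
Your argument is in substance the same as the paper's: fix the truthful strategy for one player, dispose of the $g_0$ part by the dominance of $c'_1=c_1$, and then control $\mathbb{E}[h^p]$ by analysing, lie by lie, the first level at which the reported alternating sequence stops being nice, invoking $UI1$/$UI2$ exactly along the conditioned nice histories. The paper organizes this as a backward induction on the conditional expectations $\mathbb{E}[h^p\mid \tilde d=d, A_{2n-1}]$ (resp.\ $\mathbb{E}[h^{l+1}\mid \tilde c=c, B_{2n}]$), whereas you organize it forward as a first-failure-time decomposition; these are mirror images of one another, and your treatment of (\ref{eq64}) is complete, since there the target $4P(T\text{ even})-6P(T\text{ odd})\geq 0$ genuinely follows from term-by-term positivity of $4u_i-6v_i\geq \tfrac12-10\alpha-6\alpha^2>0$.

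The one place where the write-up falls short of a proof is the last step of (\ref{eq66}). The inequality $3P(T\text{ odd})-2P(T\text{ even})\geq 1$ is \emph{not} a sum-of-nonnegative-terms statement: a single lie contributes $3u_1-2v_1$, which is only guaranteed to be at least $1-5\alpha-2\alpha^2\approx 0.797$, short of $1$, and the terminal contribution $3(\tfrac12-\alpha)W_{\mathrm{final}}$ is damped by the survival weight $W_{\mathrm{final}}=\prod_j s_js'_j$, which after $k$ lies may be as small as $(\tfrac12-\alpha)^{2k}$. To reach the constant $1$ one must run the recursion $V_{k}\geq\min_{s,s'}\bigl(3-5s+ss'(2+V_{k-1})\bigr)$ with $V_0=3(\tfrac12-\alpha)$ and check that its fixed point ($\approx 1.06$ at $\alpha=1/25$) exceeds $1$ and that $V_0$ starts above it --- which is precisely what the paper's backward induction with hypothesis $\mathbb{E}[h^{l+1}\mid\tilde c=c,B_{2n+2}]\leq-\varepsilon$ accomplishes, its inductive step reducing to $\tfrac32-11\alpha-4\alpha^2\geq 1$. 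You correctly flag this bookkeeping as the remaining work, and the numbers do check out, but as written the clause ``combining these gives $\geq 1$'' is asserted rather than proved, and it is the only step of the whole proposition that would fail for a moderately larger $\alpha$.
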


As a   consequence of this proposition, under conditions $UI1$ and $UI2$ we easily obtain  condition  $1)$ of theorem \ref{thm3} : 
\begin{corollary}  \label{cor2} If $l\neq p$ then $d(u^l,u^p)\geq 2\varepsilon.$
\end{corollary}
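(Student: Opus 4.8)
The plan is to read off the bound directly from Proposition \ref{pro2} by exhibiting a single payoff structure that separates the values of $u^l$ and $u^p$. Since the pseudo-distance $d$ is symmetric, I would assume without loss of generality that $l>p$, so that $p+1\leq l$.

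First I would recall that $d(u^l,u^p)=\sup_{g\in\CG}|\val(u^l,g)-\val(u^p,g)|$, so it suffices to produce one game $g$ for which the values at $u^l$ and $u^p$ differ by at least $2\varepsilon$. The natural candidate is $g=g^{p+1}$, viewed as an element of $\CG$ after the usual identification of $C^{p+1}$ and $D^{p}$ with finite subsets of $\N$ and extension of the payoffs outside the support in accordance with the definition of a payoff structure of a given size; this is legitimate since $|g^{p+1}|\leq 8/9\leq 1$, so all payoffs lie in $[-1,1]$.

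Next I would apply the two inequalities of Proposition \ref{pro2} to this game. Inequality \refp{eq66}, used with $l$ replaced by $p$, gives $\val(u^p,g^{p+1})\leq -\varepsilon$. Inequality \refp{eq64}, applied to the information structure $u^l$ together with the index $p+1$ (which is admissible precisely because $p+1\leq l$), gives $\val(u^l,g^{p+1})\geq \varepsilon$. Subtracting,
\[
\val(u^l,g^{p+1})-\val(u^p,g^{p+1})\ \geq\ \varepsilon-(-\varepsilon)\ =\ 2\varepsilon,
\]
whence $d(u^l,u^p)\geq|\val(u^l,g^{p+1})-\val(u^p,g^{p+1})|\geq 2\varepsilon$, as claimed.

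I do not expect any genuine obstacle: the statement is an immediate corollary of Proposition \ref{pro2}, and the real mathematical content lies entirely in that proposition. The only point requiring care is the bookkeeping of indices, namely checking that the hypotheses of \refp{eq64} and \refp{eq66} are met by the chosen game $g^{p+1}$ and by the two structures $u^l,u^p$; in particular one must invoke the reduction $l>p$ to guarantee that $p+1$ lies in $\{1,\dots,l\}$, which is exactly what makes \refp{eq64} applicable.
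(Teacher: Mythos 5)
Your proof is correct and is exactly the paper's argument: the paper also assumes $l>p$ and writes $d(u^l,u^p)\geq \val(u^l,g^{p+1})-\val(u^p,g^{p+1})\geq \varepsilon-(-\varepsilon)$, invoking \refp{eq64} for $u^l$ with index $p+1\leq l$ and \refp{eq66} for $u^p$. Your extra remarks on embedding $g^{p+1}$ into $\CG$ and on the index bookkeeping are sound but add nothing beyond what the paper leaves implicit.
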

\proof Assume $l>p$, then  $d(u^l,u^p)\geq \val(u^l,g^{p+1})-\val(u^p,g^{p+1})\geq \varepsilon - (-\varepsilon).$ \square

\vspace{0.5cm}

\noindent{\bf Proof of proposition \ref{pro2}.} We assume that $UI1$ and $UI2$ hold. We fix $l\geq 1$,  work on the probability space $K \times C^l \times D^l$ equipped
with the probability $u^l$, and denote by $\tilde{c}$ and $\tilde{d}$ the random variables of the signals received by the players.

1) We first prove (\ref{eq64}), and consider the game $\Gamma(u^l,g^p)$ with $p\in \{1,...,l\}$.
We assume that player 1 chooses the truthful strategy. Fix  $d=(d_1,...,d_l)$ in $D^l$ and $d'=(d'_1,...,d'_{p-1})$ in $D^{p-1}$, and assume that player 2 has received the signal  $d$  and  chooses to report $d'$.



Define the non-increasing sequence of events:  $$A_n=\{\tilde{c} \smile_n d'\}.$$
We will prove by backward induction  that:
\begin{equation}\label{eq65}\forall n=1,...,p, \;\;  \mathbb{E}[ h^p(\tilde{c},d') |\tilde{d}=d, A_{2n-1} ] \geq
\varepsilon. \end{equation}
 
If $n=p$, $h^p(\tilde{c},d')=\varepsilon$ on the event $A_{2p-1}$, implying
the result. Assume now that for some $n$ such that $1\leq n<p$, we have :
$ \mathbb{E}[h^{p}(\tilde{c},d')|\tilde{d}=d, A_{2n+1}]\geq
\varepsilon.$
 Since we have a non-increasing sequence of events, 
$
\mathds{1}_{A_{2n-1}}=\mathds{1}_{A_{2n+1}}+\mathds{1}_{A_{2n-1}}\mathds{1}%
_{A_{2n}^{c}}+\mathds{1}_{A_{2n}}\mathds{1}_{A_{2n+1}^{c}},$
so by definition of the payoffs,  
$h^{p}(\tilde{c},d')\mathds{1}_{A_{2n-1}}=h^{p}(\tilde{c},d')\mathds{1}%
_{A_{2n+1}}+5\varepsilon \mathds{1}_{A_{2n-1}}\mathds{1}_{A_{2n}^{c}}-5%
\varepsilon \mathds{1}_{A_{2n}}\mathds{1}_{A_{2n+1}^{c}}.
$

First assume  that $d'_n=d_n$. By  construction  of the Markov chain, $u^l(A_{2n+1}|A_{2n-1},\tilde{d}=d)=1$,  implying that 
$u^l(A_{2n+1}^{c}|A_{2n-1},\tilde{d}=d)=u^l(A_{2n}^{c}|A_{2n-1},\tilde{d}=d)=0$. As a consequence,
\begin{eqnarray*}
\mathbb{E}[h^{p}(\tilde{c},d')|\tilde{d}=d,A_{2n-1}]& =&\mathbb{E}[h^{p}(\tilde{c},d')\mathds{1}_{A_{2n+1}} |\tilde{d}=d,A_{2n-1}] \\
& = &\mathbb{E}[  \mathbb{E}[h^{p}(\tilde{c},d')|\tilde{d}=d, A_{2n+1}]\mathds{1}_{A_{2n+1}} |\tilde{d}=d,A_{2n-1}]   \\
& \geq \varepsilon .
\end{eqnarray*}%
Assume now that  $d'_n\neq d_n$. Assumption UI2 implies that :
\begin{eqnarray*}
 u^l(A_{2n}^{c}|A_{2n-1},\tilde{d}=d) &\geq &1/2-\alpha,\\
 u^l(A_{2n}\cap A_{2n+1}^{c}|A_{2n-1},\tilde{d}=d) &\leq& (1/2+\alpha)^2,\\
  u^l(A_{2n+1}|A_{2n-1},\tilde{d}=d) & \geq &(1/2-\alpha)^2.
\end{eqnarray*}%
It follows that : 
\begin{align*}
\mathbb{E}[h^{p}(\tilde{c},d'|\tilde{d})=d,A_{2n-1}]& =\mathbb{E}[\mathbb{E}[h^{p}(\tilde{c},d')|\tilde{d}=d,A_{2n+1}]%
\mathds{1}_{A_{2n+1}}|\tilde{d}=d,A_{2n-1}] \\
& \quad +5\varepsilon u^l(A_{2n}^{c}|A_{2n-1},\tilde{d}=d)-5\varepsilon
u^{l}(A_{2n}\cap A_{2n+1}^{c}|A_{2n-1},\tilde{d}=d) \\
& \geq \varepsilon \,  (\frac{1}{4}-\alpha +\alpha ^{2})+5\, \varepsilon \,  (\frac{1}{2%
}-\alpha )-5\, \varepsilon \,  (\frac{1}{4}+\alpha +\alpha ^{2}) \\
& =\varepsilon \,  (\frac{3}{2}-11 \alpha - 4 \alpha^2)\geq \varepsilon ,
\end{align*}%
 And (\ref{eq65})  follows by backward induction.
 
Since $A_1$ is an event which holds almost surely, we deduce that 
$\mathbb{E}[ h^p(\tilde{c},d' )|\tilde{d}=d]  \geq \varepsilon. 
$ Hence   the truthful strategy of player 1 guarantees the payoff $\varepsilon$ in $\Gamma(u^l,g^p)$.\\

2)  We now prove (\ref{eq66}) and consider the  Bayesian game $\Gamma(u^l,g^{l+1})$, assuming 
 that player 2  chooses the truthful strategy.  Fix  $c=(c_1,...,c_l)$ in $C^l$ and $c'=(c'_1,...,c'_{l-1})$ in $C^{l-1}$, and assume that player 1 has received the signal  $c$   and  chooses to report $c'$.  We will show that the expected payoff of player 1    is not larger than  $-\varepsilon$, and assume w.l.o.g. that $c'_1=c_1$. 
 Consider the
non-increasing sequence of events : $$B_n=\{c^{\prime }\smile_n \tilde{d}\, \}.$$ We will prove by backward induction 
that:  
\begin{equation*}
\forall n=1,...,l, \;\;  \mathbb{E}[ h^{l+1}(c^{\prime },\tilde{d}) | \tilde{c}=c, B_{2n} ] \leq
-\varepsilon .
\end{equation*}

If $n=l$, we have $\mathds{1}_{B_{2l}}= \mathds{1}_{B_{2l+1}}+ \mathds{1}%
_{B_{2l}}\mathds{1}_{B_{2l+1}^c}$, and 
$
h^{l+1}(c^{\prime },\tilde{d})\mathds{1}_{B_{2l}}= \varepsilon\mathds{1}%
_{B_{2l+1}}-5\varepsilon \mathds{1}_{B_{2l}}\mathds{1}_{B_{2l+1}^c}.
$
UI1 implies that $
|u^l( B_{2l+1} | \tilde{c}=c, B_{2l} ) - \frac{1}{2} | \leq \alpha$
, and it  follows that :
\begin{align*}
\mathbb{E}[h^{l+1}(c^{\prime },\tilde{d})|\tilde{c}=c,B_{2l}]& =\varepsilon \, 
u^{l}(B_{2l+1}|\tilde{c}=c,B_{2l})-5\varepsilon \, u^{l}(B_{2l+1}^{c}|u=
\hat u,B_{2l}) \\
& \leq \varepsilon \, (\frac{1}{2}+\alpha )-5\varepsilon \, (\frac{1}{2}-\alpha ) \leq -\varepsilon. \end{align*}

Assume now that   for some $n=1,..., l-1$, we have $
\mathbb{E}[h^{l+1}(c^{\prime },\tilde{d})|\tilde{c}=c,B_{2n+2}]\leq
-\varepsilon.$
We have $
\mathds{1}_{B_{2n}}=\mathds{1}_{B_{2n+2}}+\mathds{1}_{B_{2n}}\mathds{1}%
_{B_{2n+1}^{c}}+\mathds{1}_{B_{2n+1}}\mathds{1}_{B_{2n+2}^{c}},$
 and by definition of $h^{l+1}$, 
\begin{equation*}
h^{l+1}(c^{\prime },\tilde{d})\mathds{1}_{B_{2n}}=h^{l+1}(c^{\prime },\tilde{d})\mathds{1}%
_{B_{2n+2}}-5\varepsilon \mathds{1}_{B_{2n}}\mathds{1}_{B_{2n+1}^{c}}+5%
\varepsilon \mathds{1}_{B_{2n+1}}\mathds{1}_{B_{2n+2}^{c}}.
\end{equation*}%

First assume that $c_{n+1}^{\prime }= c_{n+1}$, then $u^{l}(B_{2n+2}|B_{2n},\tilde{c}=c)=1$.  
Then :
\begin{eqnarray*}
\mathbb{E}[h^{l+1}(c^{\prime },\tilde{d})|\tilde{c}=c,B_{2n}]& =&\mathbb{E}[h^{l+1}(c^{\prime },\tilde{d}) \mathds{1}_{B_{2n+2}}|\tilde{c}=c,B_{2n}], \\
&=& \mathbb{E}[\mathbb{E}[h^{l+1}(c^{\prime },\tilde{d})|\tilde{c}=c,B_{2n+2}]%
\mathds{1}_{B_{2n+2}}|\tilde{c}=c,B_{2n}]  \leq -\varepsilon.
\end{eqnarray*}

Assume on the contrary  that  $c_{n+1}^{\prime }\neq c_{n+1}$, assumption UI1 implies that :
\begin{eqnarray*}
 u^{l}(B_{2n+1}^{c}|B_{2n},\tilde{c}=c) & \geq &1/2-\alpha,\\
 u^{l}(B_{2n+1}\cap B_{2n+2}^{c}|B_{2n},\tilde{c}=c)&\leq& (1/2+\alpha)^2,\\
 u^{l}(B_{2n+2}|B_{2n},\tilde{c}=c)  &\geq &(1/2-\alpha)^2.
\end{eqnarray*}%
It follows that :
\begin{align*}
\mathbb{E}[h^{l+1}(c^{\prime },\tilde{d})|\tilde{c}=c,B_{2n}]& =\mathbb{E}[\mathbb{E}[h^{l+1}(c^{\prime },\tilde{d})|\tilde{c}=c,B_{2n+2}]%
\mathds{1}_{B_{2n+2}}|\tilde{c}=c,B_{2n}] \\
& \quad -5\, \varepsilon \,  u^{l}(B_{2n+1}^{c}|B_{2n},\tilde{c}=c)+5 \, \varepsilon \, 
u^{l}(B_{2n+1}\cap B_{2n+2}^{c}|B_{2n},\tilde{c}=c) \\
& \leq - \, \varepsilon \,  (\frac{1}{4}-\alpha+\alpha ^{2} )-5 \, \varepsilon \, (\frac{1}{%
2}-\alpha )+5 \, \varepsilon \,  (\frac{1}{4}+\alpha+\alpha^2 ) \leq -\varepsilon.
\end{align*}%

By induction, we obtain $ \mathbb{E}[ h^{l+1}(c^{\prime },\tilde{d}) | \tilde{c}=c, B_{2} ] \leq
-\varepsilon$. Since $B_2$  holds almost surely here, we get
$\mathbb{E}[ h^{l+1}(c^{\prime },\tilde{d}) |\tilde{c}=c]  \leq -\varepsilon,$
  showing  that the truthful strategy of player 2  guarantees that the payoff of the maximizer is less or equal to $%
-\varepsilon$, and concluding the proof. \square

\subsection{Existence of an appropriate Markov chain} \label{existence}

Here we conclude the proof of Theorem \ref{thm3} by showing the existence of an even integer $N$ and a Markov chain with law $\nu$ on $A=\{1,...,N\}$ satisfying our conditions :

$1)$  the law of the first state of the Markov chain is uniform on $A$,

$2)$  for each  $a$ in $A$, there are exactly  $N/2$ elements $b$ in $A$ such that $\nu(b|a)=2/N$, 

$3)$    $UI1$ and $UI2$. 

Denoting by  $P=(P_{a,b})_{(a,b)\in A^2}$   the transition matrix of the Markov chain, we have to  prove the existence of $P$ satisfying $2)$ and $3)$. The proof is non constructive and uses the following probabilistic method, where we select independently for each $a$ in $A$, the set $\{b\in A, P_{a,b}>0\}$ uniformly among the subsets of $A$ with cardinal $N/2$.  We will show that when $N$ goes to infinity,  the probability of selecting  an appropriate transition matrix does not only become positive, but converges to 1. 

Formally, denote by 
$\mathcal{S}_{A} $ the collection of all subsets $S\subseteq A$ with cardinality $%
\left\vert S\right\vert =\frac{1}{2}N$.  We consider  
a collection  $\left( S_{a}\right) _{a\in A}$ of     i.i.d. random variables   uniform distributed  over $%
\mathcal{S}_{A}$ defined on a probability space $(\Omega_N,\mathcal{F}_N,%
\mathbb{P}_N)$. For all $a$, $b$ in $A$, let $$X_{a,b}=\mathds{1}_{\{ b\in S_{a}\}} \; {\rm and}\; 
P_{a,b}=\frac{2}{N}X_{a,b}.$$
By construction, $P$ is a transition matrix satisfying  $2)$. Theorem \ref{thm3} will now follow directly from the following proposition. 
\begin{proposition} \label{pro3}   
$$\mathbb{P}_N  \left(\text{ }P\text{ induces a Markov chain  satisfying 
 UI1 and UI2 }\right)  \xrightarrow[n \to \infty]{}1.$$ In particular,   the
above probability is strictly positive for all sufficiently large $N$.
\end{proposition}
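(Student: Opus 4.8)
The plan is to show that conditions $UI1$ and $UI2$, though stated for information structures $u^l$ of unbounded length, are equivalent to a \emph{finite}, $l$-independent family $\mathcal E$ of purely combinatorial conditions on the random sets $(S_a)_{a\in A}$, each holding with probability $1-e^{-\Omega(N)}$; Proposition \ref{pro3} then follows by a union bound over $\mathcal E$. Throughout I write $R_t:=\{b\in A:\ t\in S_b\}$ for the set of predecessors of $t$, so that $a\to b$ is an allowed transition iff $b\in S_a$, equivalently $a\in R_b$.

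The engine is a decoupling observation: since $\nu$ is uniform over nice sequences, once the odd coordinates of the interleaving $(c_1,\tilde d_1,\dots,c_l,\tilde d_l)$ are fixed to $\tilde c=c$, niceness splits into the \emph{independent} per-coordinate constraints $\tilde d_j\in S_{c_j}\cap R_{c_{j+1}}$ ($j<l$) and $\tilde d_l\in S_{c_l}$. Imposing in addition the hypothetical niceness events defining the conditionings in \eqref{eq61}--\eqref{eq62} only sharpens each constraint coordinatewise, so the $\tilde d_j$ stay independent, each uniform on an intersection of at most four of the sets $\{S_a,R_a\}$; the symmetric statement holds after conditioning on $\tilde d=d$ for $UI2$. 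As a result every conditional probability in \eqref{eq61}--\eqref{eq63} equals a ratio $|W\cap E|/|W|$ in which $W$ is an intersection of at most three of the sets $S_a,R_a$ and $E$ is either a set $S_t$ or a predecessor set $R_t$ for a single index $t\in A$; for instance \eqref{eq61} becomes $|\{b\in S_{c_l}\cap S_{c'_l}:\ c'_{l+1}\in S_b\}|/|S_{c_l}\cap S_{c'_l}|$. The decisive point is that this ratio depends only on the at most four indices entering $W$ and $t$, not on $l$ nor on the history, so as $(l,c,c')$ vary the ratios run through a fixed family $\mathcal E$ of size at most $CN^4$.

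It then suffices to bound the failure probability of a single member of $\mathcal E$. Fixing the indices and conditioning on the (at most three) sets cutting out $W$, intersection sizes of independent uniform $\tfrac N2$-subsets concentrate around $N/4$ or $N/8$, so $|W|\ge N/10$ outside an event of probability $e^{-\Omega(N)}$. Conditionally on such a $W$, the count $|W\cap E|$ is, up to the at most two indices $b\in W$ whose own set $S_b$ also helped define $W$, either a sum $\sum_{b\in W}\mathbf 1_{t\in S_b}$ of independent fair coins (when $E=R_t$) or a hypergeometric count (when $E=S_t$); in both cases its mean is $|W|(\tfrac12+O(1/N))$, the $O(1/N)$ bias arising only when a predecessor constraint on the same $S_b$ with a distinct index is already built into $W$, which is exactly where the hypotheses $c_m\neq c'_m$ and $d_m\neq d'_m$ in \eqref{eq62}--\eqref{eq63} are used. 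Hoeffding's inequality then bounds the probability that the ratio leaves $[\tfrac12-\alpha,\tfrac12+\alpha]$, with $\alpha=\tfrac1{25}$, by $2e^{-2\alpha^2|W|}\le e^{-cN}$. A union bound over $\mathcal E$ gives total failure probability at most $CN^4e^{-cN}\to0$, so with probability tending to $1$ all of $UI1$ and $UI2$ hold, which proves Proposition \ref{pro3} and completes the proof of Theorem \ref{thm3}.

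The main obstacle is the reduction itself, not the concentration. A direct union bound would range over infinitely many triples $(l,c,c')$, and even for a single $l$ over $N^{\Theta(l)}$ of them, which no $e^{-\Omega(N)}$ tail can absorb once $l\gtrsim N/\log N$; everything hinges on the decoupling lemma, which collapses all of these into the $O(N^4)$ local, $l$-independent conditions of $\mathcal E$. The only remaining care is the mild dependence between the test set $E$ and the sets building $W$, together with the $O(1/N)$ bias it creates; both are comfortably absorbed into the Hoeffding slack, since $\alpha$ is an absolute constant while the fluctuations are of order $N^{-1/2}$.
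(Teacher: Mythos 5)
Your proposal is correct and follows essentially the same route as the paper: the conditional-independence (Markov) structure of $\nu$ given one player's signals collapses all of $UI1$ and $UI2$ into $O(N^4)$ local, $l$-independent ratios of intersection counts of the sets $S_a$ and their predecessor sets, each of which concentrates near $1/2$ by Hoeffding after handling the mild self-dependence and the $O(1/N)$ hypergeometric bias, and a union bound finishes. The paper merely organizes the two steps in the opposite order, proving concentration of the statistics $Y_{a,b}^{c,d}$ and the event $E$ first (Lemmas \ref{lemT1}--\ref{lembound}) and the reduction to those statistics second (Lemma \ref{lemT4}), but the content is the same.
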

The rest of this section is devoted to the proof of proposition \ref{pro3}.

%
%
%
%
We start with  probability bounds based on Hoeffding's inequality.

\begin{lemma}
\label{lemT1}For any $a\neq b,$ each $\gamma >0$ 
\begin{equation*}
\mathbb{P}_N\left(\left\vert |S_{a}\cap S_{b}|-\frac{1}{4}N\right\vert \geq
\gamma N \right)\leq \frac{1}{2} e^{4}Ne^{-2\gamma^2 N}.
\end{equation*}
\end{lemma}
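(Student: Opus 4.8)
The plan is to realize the pair $(S_a,S_b)$ as a pair of independent fair-coin subsets conditioned on having the correct cardinality, apply Hoeffding's inequality in the unconditioned model where the quantity of interest is a clean i.i.d.\ sum, and then pay for the conditioning by lower-bounding its probability. The point of passing through the coin model is that it turns $|S_a\cap S_b|$ into a sum of $N$ independent Bernoulli$(1/4)$ variables, which is exactly what produces the exponent $-2\gamma^2 N$.

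First I would introduce, on an enlarged probability space, independent Bernoulli$(1/2)$ families $(\tilde U_j)_{j\in A}$ and $(\tilde V_j)_{j\in A}$, all mutually independent, and set $\tilde S_a=\{j:\tilde U_j=1\}$ and $\tilde S_b=\{j:\tilde V_j=1\}$. Conditioning an i.i.d.\ Bernoulli$(1/2)$ family on the value of its sum gives the uniform law on subsets of that size; hence, writing $E=\{|\tilde S_a|=N/2\}\cap\{|\tilde S_b|=N/2\}$, the conditional law of $(\tilde S_a,\tilde S_b)$ given $E$ is exactly the law of $(S_a,S_b)$ (two independent uniform elements of $\mathcal S_A$). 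In particular $|S_a\cap S_b|$ has the conditional law, given $E$, of $T:=\sum_{j\in A}\tilde U_j\tilde V_j$. The variables $W_j:=\tilde U_j\tilde V_j$ are i.i.d.\ Bernoulli$(1/4)$ with values in $[0,1]$ and $\mathbb{E}[T]=N/4$, so Hoeffding's inequality gives $\mathbb{P}_N\!\left(|T-\tfrac14 N|\ge\gamma N\right)\le 2e^{-2\gamma^2 N}$, the exponent arising as $2(\gamma N)^2/N$.

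It then remains to transfer this to the conditional measure. Using $\mathbb{P}_N(\,\cdot\mid E)\le \mathbb{P}_N(\,\cdot\,)/\mathbb{P}_N(E)$ together with the independence of $\tilde S_a,\tilde S_b$, one has $\mathbb{P}_N(E)=\big(\binom{N}{N/2}2^{-N}\big)^2$. Plugging in a Stirling lower bound on the central binomial coefficient of the form $\binom{N}{N/2}2^{-N}\ge \mathrm{const}/(e^{2}\sqrt N)$ bounds $1/\mathbb{P}_N(E)$ by a constant times $e^{4}N$; here the $e^{4}$ is precisely the square --- one factor for each of the two cardinality constraints --- of the $e^{2}$ coming from Stirling's upper estimate of $((N/2)!)^2$. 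Combining the two displays yields $\mathbb{P}_N\!\left(\big||S_a\cap S_b|-\tfrac14 N\big|\ge \gamma N\right)\le 2e^{-2\gamma^2 N}/\mathbb{P}_N(E)\le \tfrac12 e^{4}N\,e^{-2\gamma^2 N}$, with the stated constant absorbing the crude Stirling factors.

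The only genuinely delicate step is the last one: I must control the cost of conditioning on $E$, an event of probability only $\Theta(1/N)$, so that the loss is absorbed into the single linear factor $N$ (and the clean constant $e^4$) rather than a quadratic $N^2$. This forces using the $\Theta(1/\sqrt N)$ central-binomial estimate rather than the trivial maximal-term bound $\binom{N}{N/2}2^{-N}\ge 1/(N+1)$, which would be too weak by a factor $N$. Everything else --- the coin representation, the conditioning inequality, and the Hoeffding bound for bounded i.i.d.\ summands --- is routine.
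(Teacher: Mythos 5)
Your proposal is correct and follows essentially the same route as the paper: represent $(S_a,S_b)$ as independent Bernoulli$(1/2)$ indicator families conditioned on the cardinality events, apply Hoeffding to the i.i.d.\ Bernoulli$(1/4)$ sum $\sum_j \tilde U_j\tilde V_j$ to get the factor $2e^{-2\gamma^2N}$, and divide by the Stirling lower bound $\bigl(\binom{N}{N/2}2^{-N}\bigr)^2\ge 4/(e^4N)$ to absorb the conditioning cost into the factor $\tfrac12 e^4N$. No gaps; the constants and exponents match the paper's computation exactly.
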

\begin{proof}
Consider a family of i.i.d. Bernoulli variables $(\widetilde
X_{i,j})_{i=a,b, \, j \in A}$ of parameter $\frac{1}{2}$ defined on a space $%
(\Omega,\mathcal{F},\mathbb{P})$. For $i=a,b$, define the events $\widetilde
L_i=\{ \sum_{j \in A} \widetilde X_{i,j} = \frac{N}{2}\}$ and the set-valued
variables $\widetilde S_i = \{ j \in A \,|\, \widetilde X_{i,j}=1 \}$. It is
straightforward to check that the conditional law of $(\widetilde S_a,
\widetilde S_b)$ given $\widetilde L_a \cap \widetilde L_b$ under $\mathbb{P}
$ is the same as the law of $(S_a,S_b)$ under $\mathbb{P}_N$. It follows
that 
\begin{align*}
\mathbb{P}_N\left(\left\vert |S_{a}\cap S_{b}|-\frac{1}{4}N\right\vert \geq
\gamma N \right)&=\mathbb{P}\left(\left\vert |\widetilde S_{a}\cap\widetilde
S_{b}|-\frac{1}{4}N\right\vert \geq \gamma N \,\Big|\, \widetilde L_a\cap
\widetilde L_b \right) \\
&\leq \frac{\mathbb{P}\left(\left\vert |\widetilde S_{a}\cap\widetilde
S_{b}|-\frac{1}{4}N\right\vert \geq \gamma N \right)}{\mathbb{P}\left(
\widetilde L_a\cap \widetilde L_b \right)}.
\end{align*}
Using Hoeffding inequality, we have 
$$
\mathbb{P}\left(\left\vert |\widetilde S_{a}\cap\widetilde S_{b}|-\frac{1}{4}%
N\right\vert \geq \gamma N \right) =\mathbb{P}\left(\left\vert \sum_{j \in
A} \widetilde X_{a,j} \widetilde X_{b,j}-\frac{1}{4}N\right\vert \geq \gamma
N \right)  \leq 2 e^{-2 \gamma^2 N}.
$$
On the other hand, using Stirling approximation\footnote{We have $n^{n+\frac{1}{2}%
}e^{-n} \leq n! \leq e n^{n+ \frac{1}{2}} e^{-n}$ for each $n$.}, we have 
\begin{equation*}
\mathbb{P}\left( \widetilde L_a\cap \widetilde L_b \right)= \left( \frac{1}{%
2^N} \frac{ N !}{ \left(\frac{N}{2}! \right)^2} \right)^2 \geq \left( \frac{%
2^{N+1} N^{- \frac{1}{2}}}{2^N e^2} \right)^2 = \frac{4}{N e^4}.
\end{equation*}
We deduce that 
$
\mathbb{P}_N\left(\left\vert |S_{a}\cap S_{b}|-\frac{1}{4}N\right\vert \geq
\gamma N \right)\leq \frac{1}{2} e^{4}Ne^{-2\gamma^2 N}.
$
\end{proof}

\begin{lemma}
\label{lemT2}For each $a\neq b,$ for any subset $S\subseteq A$ and any $\gamma \geq \frac{1}{2N-2}$, %
$$
\mathbb{P}_N\left( \left\vert \sum_{i\in S}X_{i,a}-\frac{1}{2}\left\vert
S\right\vert \right\vert \geq \gamma N\right)  \leq  2e^{-2 N \gamma ^{2}}, \;{\mathnormal and }\; 
\mathbb{P}_N\left( \left\vert \sum_{i\in S}X_{i,a}X_{i,b}-\frac{1}{4}%
\left\vert S\right\vert \right\vert \geq \gamma N\right)  \leq  2e^{- \frac{1}{2} N \gamma ^{2}}.$$
%
\end{lemma}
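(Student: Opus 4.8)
The plan is to exploit the independence of the sets $(S_i)_{i\in A}$ across the first index. While for a fixed first index the indicators $(X_{a,b})_{b}$ are dependent (they are forced to sum to $N/2$), both sums in the statement run over the first index $i\in S$, so in each case I would be adding independent, $[0,1]$-valued summands and could apply Hoeffding's inequality exactly as in Lemma~\ref{lemT1}.

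First I would record the relevant marginals. For a fixed $a$ and any $i$ we have $\mathbb{P}_N(a\in S_i)=|S_i|/N=1/2$, so $(X_{i,a})_{i\in S}$ are i.i.d.\ Bernoulli$(1/2)$ with $\mathbb{E}_N\big[\sum_{i\in S}X_{i,a}\big]=\tfrac12|S|$. Hoeffding applied to these $|S|$ independent terms gives
$$\mathbb{P}_N\Big(\big|\textstyle\sum_{i\in S}X_{i,a}-\tfrac12|S|\big|\geq \gamma N\Big)\leq 2\,e^{-2(\gamma N)^2/|S|},$$
and since $|S|\leq N$ forces $(\gamma N)^2/|S|\geq \gamma^2 N$, this is at most $2e^{-2N\gamma^2}$, the first bound.

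For the second sum I would treat, for $a\neq b$, the products $X_{i,a}X_{i,b}=\mathds{1}_{\{a,b\in S_i\}}$, which are again i.i.d.\ across $i$ (depending only on $S_i$), now Bernoulli$(p)$ with
$$p=\mathbb{P}_N(a,b\in S_i)=\frac{(N/2)(N/2-1)}{N(N-1)}=\frac{N-2}{4(N-1)}.$$
The one subtle point — and the only place the hypothesis $\gamma\geq\frac{1}{2N-2}$ enters — is that the true mean $|S|p$ is not $\tfrac14|S|$ but slightly smaller, with gap $\tfrac14|S|-|S|p=\frac{|S|}{4(N-1)}\leq\frac{N}{4(N-1)}=\tfrac12\cdot\frac{N}{2(N-1)}\leq\tfrac12\gamma N$. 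Hence on the event in question the deviation of the sum from its \emph{true} mean is at least $\tfrac12\gamma N$, so Hoeffding yields
$$\mathbb{P}_N\Big(\big|\textstyle\sum_{i\in S}X_{i,a}X_{i,b}-\tfrac14|S|\big|\geq \gamma N\Big)\leq 2\,e^{-2(\gamma N/2)^2/|S|}\leq 2\,e^{-\tfrac12 N\gamma^2},$$
again using $|S|\leq N$. Halving the effective deviation in order to absorb the mean-shift is exactly what degrades the constant in the exponent from $2$ to $\tfrac12$.

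The computation is routine once independence across the first index is recognized; I do not expect a genuine obstacle. The only thing requiring care is the mean correction in the product case, which is precisely what the lower bound on $\gamma$ is tailored to handle, and tracking how that correction forces the weaker exponent $\tfrac12$ in the second estimate.
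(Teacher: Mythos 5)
Your proof is correct and follows essentially the same route as the paper: both arguments use independence across the first index $i$ (since the sets $S_i$ are i.i.d.), identify the product indicators as Bernoulli with parameter $p=\frac14-\frac{1}{4N-4}$, absorb the mean shift $|S|\,|p-\frac14|\leq\frac12\gamma N$ using the hypothesis on $\gamma$, and then apply Hoeffding with the halved deviation, which is exactly what produces the exponent $\frac12$ in the second bound. No issues.
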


\begin{proof}
For the first inequality, notice that $X_{i,a}$ are i.i.d. Bernoulli random  variables with parameter $\frac{1}{2}$. The Hoeffding inequality implies that :
\begin{equation*}
\mathbb{P}_N\left( \left\vert \sum_{i\in S}X_{i,a}-\frac{1}{2}\left\vert
S\right\vert \right\vert \geq \gamma N\right) \leq  2e^{-2 \gamma ^{2}\frac{N^2}{|S|}}\leq 2e^{-2 N \gamma ^{2}}.
\end{equation*}

For the second inequality, let $Z_{i}=X_{i,a}X_{i,b}.$ Notice that all
variables $Z_{i}$ are i.i.d. Bernoulli random  variables with parameter  $p=\frac{1}{2}%
\left( \frac{\frac{N}{2}-1}{N-1}\right) =\frac{1}{4}-\frac{1}{4N-4}$. The Hoeffding inequality implies that 
\begin{eqnarray*}
\mathbb{P}_N\left( \left\vert \sum_{i\in S}Z_{i}-\frac{1}{4}\left\vert
S\right\vert \right\vert \geq \gamma N\right) &\leq &\mathbb{P}_N\left(
\left\vert \sum_{i\in S}Z_{i}-p\left\vert S\right\vert \right\vert \geq 
\frac{1}{2}\gamma N\right) \\
&\leq & 2e^{-2 \gamma ^{2}\frac{N^2}{|S|}}\leq 2e^{-\frac{1}{2} N \gamma ^{2}},
\end{eqnarray*}
where we used that $|S||p-\frac{1}{4}| \leq \frac{N}{4N-4} \leq \frac{\gamma
N}{2}$ for the first inequality. \end{proof}

\vspace{0.3cm}

\begin{definition}
For each $a\neq b$ and $c\neq d,$ each $\gamma >0,$ define :

 \begin{tabular}{lll}
$Y_{a}  =  2\sum_{i\in A}X_{i,a},$ &   $Y^{c}=2\sum_{i\in A}X_{c,i}=N$, & \;\\
$Y_{a,b}  = 4\sum_{i\in A}X_{i,a}X_{i,b},$ & $ Y_{a}^{c}=4\sum_{i\in A}X_{i,a}X_{c,i}, $ & $ Y^{c,d}=4\sum_{i\in A}X_{c,i}X_{d,i},$ \\
$Y_{a,b}^{c} = 8\sum_{i\in A}X_{i,a}X_{i,b}X_{c,i},$ & $ Y_{a}^{c,d}=8\sum_{i\in A}X_{i,a}X_{c,i}X_{d,i},$ & $Y_{a,b}^{c,d} = 16\sum_{i\in A}X_{i,a}X_{i,b}X_{c,i}X_{d,i}.$
\end{tabular}
\end{definition}

\vspace{0.3cm}

\begin{lemma}
\label{lemT3}For each $a\neq b$ and $c\neq d,$ each $\gamma \geq 64/N,$ each
of the variables $%
Z\in\{Y_{a},Y^{c},Y_{a,b},Y^{c,d},Y_{a}^{c},Y_{a,b}^{c},Y_{a}^{c,d},Y_{a,b}^{c,d}\}, 
$ 
\begin{equation*}
\mathbb{P}_N\left( \left\vert Z-N\right\vert \geq \gamma N\right) \leq
e^4Ne^{-\frac{N}{32}{( \frac{\gamma}{10})}^2}.
\end{equation*}
\end{lemma}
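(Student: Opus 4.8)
The plan is to treat the eight variables uniformly by exploiting their common shape: each equals a fixed scalar $\kappa$ times a sum $\sum_{i\in I}W_i$, where the index set $I$ and the summands $W_i$ are built from the row sets $S_i$ and the column sets $S_c,S_d$. Since $X_{c,i}=\mathds{1}_{i\in S_c}$, a column factor merely restricts the range of summation: for the pure row variables $I=A$; for $Y_a^c,Y_{a,b}^c$ one gets $I=S_c$; and for $Y^{c,d},Y_a^{c,d},Y_{a,b}^{c,d}$ one gets $I=S_c\cap S_d$. The summand $W_i$ is the product of the row indicators occurring in $Z$ (possibly empty, giving $W_i=1$); being a function of $S_i$ alone it is i.i.d. across $i$, Bernoulli with parameter $1/2$ or $p=\tfrac14-\tfrac{1}{4N-4}$ (as in Lemma \ref{lemT2}). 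The variable $Y^c=N$ is deterministic and is discarded.

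First I would condition on the column sets $S_c,S_d$. Conditionally $I$ and $|I|$ are fixed and $\sum_{i\in I}W_i$ is a sum of $|I|\le N$ i.i.d. Bernoulli variables, so Hoeffding's inequality gives, for every realization of the column sets,
$$\mathbb{P}_N\!\left(\Big|\kappa\sum_{i\in I}W_i-\kappa|I|\,\mathbb{E}[W]\Big|\ge t\;\Big|\;S_c,S_d\right)\le 2\exp\!\left(-\frac{2t^2}{\kappa^2|I|}\right)\le 2\exp\!\left(-\frac{2t^2}{\kappa^2 N}\right).$$
For $Y_a,Y_{a,b},Y_a^c,Y_{a,b}^c$ the count $|I|$ is deterministic ($N$ or $N/2$), so the conditional mean $\kappa|I|\,\mathbb{E}[W]$ equals $N$ up to a deterministic correction: exactly $N$ when $\mathbb{E}[W]=1/2$, and $N-\tfrac{N}{N-1}$ (a correction of modulus $\le 2$) when $\mathbb{E}[W]=p$. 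Since $\gamma N\ge 64$, this $O(1)$ correction is absorbed by replacing $t=\gamma N$ with $t=\gamma N-2\ge\tfrac{31}{32}\gamma N$, and the displayed bound already beats the target.

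The genuine difficulty is the three variables whose summation set is the random intersection $I=S_c\cap S_d$, since then the conditional mean fluctuates through $m:=|S_c\cap S_d|$. Here I would decompose
$$|Z-N|\;\le\;\underbrace{\big|Z-\kappa m\,\mathbb{E}[W]\big|}_{\text{(i) conditional fluctuation}}\;+\;\underbrace{\big|\kappa\,\mathbb{E}[W]\,m-N\big|}_{\text{(ii) count fluctuation}},$$
and split the target deviation $\gamma N$ between the two. Term (i) is controlled by the conditional Hoeffding bound above, uniformly over the conditioning using $m\le N$. For term (ii), writing $\kappa\,\mathbb{E}[W]\in\{4,\,16p\}$ with $16p=4-\tfrac{4}{N-1}$, one has $|\kappa\,\mathbb{E}[W]\,m-N|\le 4|m-\tfrac N4|+O(1)$, so after absorbing the $O(1)$ correction via $\gamma N\ge 64$ it reduces to the event $|m-\tfrac N4|\gtrsim\gamma N$ controlled by Lemma \ref{lemT1}. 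A union bound over (i) and (ii) then yields a sum of two exponential tails (for $Y^{c,d}$ term (i) is absent, leaving only Lemma \ref{lemT1}).

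Finally I would collect all cases into the single stated bound. Each exponent produced above is of the form $-c\gamma^2N$ with $c$ a constant from Hoeffding or $-2\gamma'^2N$ from Lemma \ref{lemT1}, all far larger in magnitude than $\tfrac{1}{32}(\gamma/10)^2N=\gamma^2N/3200$; and each prefactor ($2$ from Hoeffding, $\tfrac12 e^4N$ from Lemma \ref{lemT1}) is at most $\tfrac12 e^4N$ for $N\ge 1$. Hence every individual tail, and the union-bound sum of the at most two tails in any one case, is dominated by $e^4N\,e^{-\frac{N}{32}(\gamma/10)^2}$. The deliberately generous constants $1/32$ and $1/10$ supply exactly the slack needed to absorb the factor-$2$ Hoeffding prefactors, the $e^4N$ of Lemma \ref{lemT1}, and the loss from splitting $\gamma N$ across the two fluctuation sources; the main bookkeeping obstacle is verifying this domination uniformly for the two doubly-columned variables $Y_a^{c,d}$ and $Y_{a,b}^{c,d}$.
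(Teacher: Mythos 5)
Your overall route is the same as the paper's: write each variable as a scalar multiple of a sum of row-indicator products over an index set determined by the column sets, control the conditional fluctuation by Hoeffding and the fluctuation of $|S_c\cap S_d|$ by Lemma \ref{lemT1}, split the deviation $\gamma N$ between the two sources, and absorb all $O(1)$ corrections using $\gamma N\geq 64$; the generous constant in the exponent does leave ample room for the bookkeeping you defer. There is, however, one concrete false step. After conditioning on $S_c,S_d$ you assert that $\sum_{i\in I}W_i$ is a sum of $|I|$ i.i.d.\ Bernoulli variables with conditional mean $\kappa|I|\,\mathbb{E}[W]$. This fails for the (at most two) indices $i\in I\cap\{c,d\}$: there $W_i$ is a product of factors $X_{i,a}=\mathds{1}_{\{a\in S_i\}}$ with $i=c$ or $i=d$, i.e.\ a function of $S_c$ (resp.\ $S_d$), so under your conditioning it is a constant, not a Bernoulli variable. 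Hence the displayed conditional Hoeffding inequality is not justified as written for the four mixed variables $Y^c_a$, $Y^c_{a,b}$, $Y^{c,d}_a$, $Y^{c,d}_{a,b}$.

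The gap is small and is repaired by exactly the device the paper uses for $Y^{c,d}_{a,b}$: replace $W_c,W_d$ by independent copies $W'_c,W'_d$ (enlarging the probability space so that $(W'_i)_{i\in A}$ is i.i.d.\ and independent of $S_c\cap S_d$), observe that this changes $Z$ by at most $2\kappa\leq 32\leq\tfrac12\gamma N$, and only then apply Hoeffding to the genuinely i.i.d.\ sum. Since you already absorb $O(1)$ mean corrections via $\gamma N\geq 64$, the same slack covers this extra term; but the substitution must be stated, because without it the conditional-independence claim on which your main estimate rests is wrong.
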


\begin{proof}
In case $Z=Y_{a} $ or $Y_{a,b},$ the bound follows from Lemma \ref{lemT2} (for $%
S=A) $. If case $Z=Y^{c},$ the bound is trivially  satisfied. If $Z=Y^{c,d},$ the bound follows from Lemma \ref{lemT1}.

In case $Z=Y_{a,b}^{c,d}$, notice that 
$$
Y_{a,b}^{c,d}= 16\sum\limits_{i\in S_{c}\cap S_{d}}Z_{i},\;\; {\rm where }\;\;  Z_{i}=X_{i,a}X_{i,b}.$$
 All variables $Z_{i}$ are i.i.d. Bernouilli random variables with parameter 
 $p=%
\frac{1}{4}-\frac{1}{4N-4}$.  Moreover, $%
\left\{ Z_{i}\right\} _{i\neq c,d}$ are independent of $S_{c}\cap S_{d}$. Up
to enlarge the probability space, we can construct a new collection of i.i.d.
Bernoulli  random  variables $Z_{i}^{\prime }$ such that $Z_{i}^{\prime
}=Z_{i}$ for all $i\neq c,d$ and such that $\left\{ (Z_{i}^{\prime
})_{i \in A},S_{c}\cap S_{d}\right\} $ are all independent. Then, 
\begin{equation*}
\left\vert Y_{a,b}^{c,d}-16\sum\limits_{i\in S_{c}\cap S_{d}}Z_{i}^{\prime
}\right\vert \leq 32,
\end{equation*}%
and, because $\frac{1}{2}\gamma N\geq 32,$ we have%
\begin{equation*}
\mathbb{P}_N\left( \left\vert Y_{a,b}^{c,d}-N\right\vert \geq \gamma
N\right) \leq \mathbb{P}_N\left( \left\vert \sum\limits_{i\in S_{c}\cap
S_{d}}Z_{i}^{\prime }-\frac{1}{16}N\right\vert \geq \frac{1}{32}\gamma
N\right) .
\end{equation*}%
Define the events 
$$
A =\left\{ \left\vert \frac{1}{4} \left\vert S_{c}\cap S_{d}\right\vert -\frac{N}{16}%
\right\vert \geq \frac{1}{160}\gamma N\right\} ,  \;\;\;
B =\left\{ \left\vert \sum\limits_{i\in S_{c}\cap S_{d}}Z_{i}^{\prime }-%
\frac{1}{4}\left\vert S_{c}\cap S_{d}\right\vert \right\vert \geq \frac{1}{40%
}\gamma N\right\} .$$
 
Then, the probability can be further bounded by 
\begin{equation*}
 \leq \mathbb{P}_N\left( A\right) +\mathbb{P}_N\left( B \right) \leq 
\frac{1}{2} e^{4}Ne^{-2N\left( \frac{1}{40}\gamma \right) ^{2}}+2e^{-\frac{1}{2}N\left( \frac{1}{40}\gamma \right) ^{2}}\leq e^4Ne^{-\frac{N \gamma^2}{3200}}
\end{equation*}%
where the first bound comes from Lemma \ref{lemT1}, and the second from the
second bound in Lemma \ref{lemT2}.

The remaining bounds have proofs similar (and simpler)  to the  case $Z=Y_{a,b}^{c,d}$.
%
%
%
\end{proof}

\vspace{0.3cm}

Finally, we describe an event $E$ that collects these bounds. Recall that  $\alpha=1/25$, and define 
 for each $a\neq b$ \ and $%
c\neq d$, 
\begin{eqnarray*}
E_{a,b,c,d} &=&\left\{ \left\vert \frac{Y_{a,b}}{Y_{a}}-1\right\vert \leq
2\alpha \right\} \cap \left\{ \left\vert \frac{Y_{a,b}^{c}}{Y_{a}^{c}}%
-1\right\vert \leq 2\alpha \right\} \cap \left\{ \left\vert \frac{Y_{a}^{c,d}%
}{Y_{a}^{c}}-1\right\vert \leq 2\alpha \right\} \cap \left\{ \left\vert \frac{%
Y_{a,b}^{c,d}}{Y_{a}^{c,d}}-1\right\vert \leq 2\alpha \right\} \\
&&\left\{ \left\vert \frac{Y^{c,d}}{Y^{c}}-1\right\vert \leq 2\alpha \right\}
\cap \left\{ \left\vert \frac{Y_{a}^{c}}{Y^{c}}-1\right\vert \leq 2\alpha
\right\} \cap \left\{ \left\vert \frac{Y_{a}^{c,d}}{Y^{c,d}}-1\right\vert
\leq 2\alpha \right\} .
\end{eqnarray*}%
Finally, let 
\begin{equation*}
E=\bigcap\limits_{a,b,c,d:a\neq b\ \text{and }c\neq d}E_{a,b,c,d}.
\end{equation*}

\vspace{0.3cm}

\begin{lemma}\label{lembound}
We have 
\begin{equation*}
\mathbb{P}_N(E)> 1- 7  e^{4}N^5e^{-%
\frac{N}{2163200}} \xrightarrow[n \to \infty]{}1.
\end{equation*}%
\end{lemma}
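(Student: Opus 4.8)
The plan is to reduce all seven ratio conditions defining $E_{a,b,c,d}$ to the two-sided tail estimate of Lemma \ref{lemT3}, by fixing a single deviation parameter $\gamma$ calibrated against $\alpha = 1/25$. The right choice is $\gamma = 1/26$: it satisfies $\gamma \geq 64/N$ once $N \geq 1664$, so Lemma \ref{lemT3} applies, and it is exactly tuned to the tolerance $2\alpha$ appearing in the definition of $E_{a,b,c,d}$.

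First I would isolate the elementary deterministic step that controls a ratio by controlling its numerator and denominator separately. If two positive quantities $Z_1, Z_2$ both satisfy $|Z_i - N| \leq \gamma N$, then $Z_2 \geq (1-\gamma)N$ and $|Z_1 - Z_2| \leq 2\gamma N$, so
\[
\left| \frac{Z_1}{Z_2} - 1 \right| = \frac{|Z_1 - Z_2|}{Z_2} \leq \frac{2\gamma}{1-\gamma}.
\]
With $\gamma = 1/26$ one has $\frac{2\gamma}{1-\gamma} = \frac{2/26}{25/26} = \frac{2}{25} = 2\alpha$. Hence, writing $G_{a,b,c,d} = \bigcap_{Z} \{ |Z - N| \leq \gamma N \}$ for the intersection over the variables $Y_a, Y^c, Y_{a,b}, Y^{c,d}, Y_a^c, Y_{a,b}^c, Y_a^{c,d}, Y_{a,b}^{c,d}$ occurring in the seven ratios, every ratio deviates from $1$ by at most $2\alpha$ on $G_{a,b,c,d}$, giving the inclusion $G_{a,b,c,d} \subseteq E_{a,b,c,d}$.

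Next I would union-bound the complement. The key observation is that $Y^c = 2\sum_i X_{c,i} = N$ is \emph{deterministic}, since $|S_c| = N/2$, so it never deviates and the event $\{|Y^c - N| > \gamma N\}$ is empty; only the remaining seven variables are genuinely random. Applying Lemma \ref{lemT3} to each of these seven with $\gamma = 1/26$, and using $\frac{\gamma^2}{3200} = \frac{1}{676 \cdot 3200} = \frac{1}{2163200}$, yields
\[
\mathbb{P}_N(E_{a,b,c,d}^c) \leq \mathbb{P}_N(G_{a,b,c,d}^c) \leq 7\, e^4 N\, e^{-\frac{N}{2163200}}.
\]
Since there are exactly $N^2(N-1)^2 < N^4$ admissible tuples $(a,b,c,d)$ with $a \neq b$ and $c \neq d$, summing over them gives $\mathbb{P}_N(E^c) < 7\, e^4 N^5 e^{-N/2163200}$, which is precisely the asserted bound; convergence to $1$ is immediate because the exponential beats the factor $N^5$.

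The only delicate point is the bookkeeping of constants: one must check that $\gamma = 1/26$ makes $\frac{2\gamma}{1-\gamma}$ equal to $2\alpha$ and simultaneously produces the exponent $\frac{1}{2163200} = \frac{1}{26^2 \cdot 3200}$, and one must notice that the deterministic identity $Y^c = N$ is exactly what reduces the prefactor from $8$ to $7$. Everything else is a routine union bound over the tail estimates already established.
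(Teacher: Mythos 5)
Your proposal is correct and follows essentially the same route as the paper: the same choice $\gamma=\frac{\alpha}{1+\alpha}=\frac{1}{26}$, the same inclusion of the event where all eight $Y$-variables lie within $\gamma N$ of $N$ into $E_{a,b,c,d}$ (you just make explicit the ratio estimate the paper calls ``easy to see''), the same application of Lemma \ref{lemT3} to the seven genuinely random variables, and the same union bound over fewer than $N^4$ tuples. Your observation that the deterministic identity $Y^c=N$ is what brings the prefactor down to $7$ is a helpful clarification of a point the paper leaves implicit.
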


\begin{proof}
Take $\gamma =\frac{\alpha }{1+\alpha}=\frac{1}{26}$ and let 
\begin{equation*}
F_{a,b,c,d}=\bigcap%
\limits_{Z\in \{Y_{a},Y_{a,b},Y^{c,d},Y^{c,d},Y_{a}^{c},Y_{a,b}^{c},Y_{a}^{c,d},Y_{a,b}^{c,d}\}}\left\{ \left\vert Z-N\right\vert \leq \gamma N\right\} .
\end{equation*}%
It is easy to see that $F_{a,b,c,d}\subseteq E_{a,b,c,d}.$ The probability
that $F_{a,b,c,d}$ holds can be bounded from Lemma \ref{lemT3} (as soon as $N\geq \frac{64}{\gamma} =1664$), as%
\begin{equation*}
\mathbb{P}_N\left( F_{a,b,c,d}\right) \geq 1-7  e^{4}Ne^{-%
\frac{N}{32.(260)^2}}.
\end{equation*}%
The result follows since  there are less  than   $N^{4}$ ways of
choosing $(a,b,c,d)$.
\end{proof}

 \vspace{0.5cm}
Computations using the bound of lemma \ref{lembound} show that $N= 52. 10^6$ is enough to have the existence of an appropriate Markov chain. So one can take $\varepsilon=3. 10^{-17}$ in the statement of theorem \ref{thm3}. 
We conclude the proof of proposition \ref{pro3} by showing  that event $E$ implies conditions $UI1$ and $UI2.$

\begin{lemma}
\label{lemT4} If event $E$ holds, then the conditions $UI1,UI2$ are satisfied.
\end{lemma}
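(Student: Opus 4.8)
The plan is to turn every conditional probability in Definition \ref{defUI} into a ratio of two of the variables $Y^{\cdots}_{\cdots}$ and then read the bound $[1/2-\alpha,1/2+\alpha]$ off the definition of $E$. The engine is the conditional law of one player's signal given the other's. Because $u^l$ draws $(c_1,d_1,\dots,c_l,d_l)$ uniformly among the nice sequences (the state $k$ integrates out), the niceness indicator factorizes as $\prod_{t}X_{c_t,d_t}\prod_{t<l}X_{d_t,c_{t+1}}$, a product of the individual transition indicators. Hence, conditionally on $\tilde c=c$, the coordinates $\tilde d_1,\dots,\tilde d_l$ are independent, with $\tilde d_t$ uniform over $\{b:X_{c_t,b}=X_{b,c_{t+1}}=1\}$ for $t<l$ and $\tilde d_l$ uniform over $\{b:X_{c_l,b}=1\}$; symmetrically for $\tilde c$ given $\tilde d=d$, where one uses again that the sequence marginal is uniform over nice sequences, so the special status of $c_1$ is irrelevant. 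Conditioning further on a niceness event $c'\smile_r\tilde d$ of the reported sequence multiplies this law by another indicator that again factorizes over the coordinates, hence merely intersects each coordinate's support with further reach-sets and preserves independence.

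First I would observe that each probability in (\ref{eq61})--(\ref{eq63}) is the conditional probability that the reported sequence stays nice after being extended by one level. Extending by one level appends exactly one transition, hence one new indicator, and that indicator involves a single coordinate --- some $\tilde d_q$ for $UI1$, some $\tilde c_q$ for $UI2$. By the factorization above, the probability in question is therefore the probability that this one coordinate, which is uniform over an explicit intersection $T$ of reach-sets, also lies in the subset $T'\subseteq T$ cut out by one additional reach-constraint; thus it equals $|T'|/|T|$. Writing these two cardinalities through the $Y$'s (whose normalizing powers $2,4,8,16$ make each of them of order $N$), the quotient becomes $\tfrac12\,Y_{\mathrm{num}}/Y_{\mathrm{den}}$, where $Y_{\mathrm{num}}$ carries exactly one more subscript or superscript than $Y_{\mathrm{den}}$ (the extra reach-constraint), and the factor $\tfrac12$ is the extra power of $2$ in $Y_{\mathrm{num}}$.

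It then remains to match each extension to one of the seven ratios composing $E_{a,b,c,d}$. For example (\ref{eq61}) with $c_l\neq c_l'$ yields $\tfrac12\,Y^{c_l,c_l'}_{c_{l+1}'}/Y^{c_l,c_l'}$, governed by $\{|Y^{c,d}_a/Y^{c,d}-1|\le 2\alpha\}$; the two extensions in (\ref{eq62}) yield $Y^{c,d}_{a,b}/Y^{c,d}_a$ and $Y^{c,d}_a/Y^c_a$ (or $Y^{c,d}/Y^c$ at the last level $m=l$); and (\ref{eq63}) with $m=1$ yields $Y_{a,b}/Y_a$, the remaining $UI2$ extensions being the mirror images of the $UI1$ ones. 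In every case, whenever the relevant reported signal of the other player coincides with the true one a superscript or subscript collapses and one lands instead on the single-index ratios $Y^c_{a,b}/Y^c_a$ or $Y^c_a/Y^c$ --- which is exactly why $E_{a,b,c,d}$ lists both the two-index and the one-index versions. Since $E$ forces each such ratio into $[1-2\alpha,1+2\alpha]$, the conditional probability lies in $[\tfrac12(1-2\alpha),\tfrac12(1+2\alpha)]=[\tfrac12-\alpha,\tfrac12+\alpha]$, as required; all denominators are positive because on $E$ (indeed on the event $F$ of Lemma \ref{lembound}) every $Y$ is within $\gamma N$ of $N$. The real work is bookkeeping: for each of the finitely many extension types one must identify the coordinate involved, the reach-constraints already imposed at the current level, and hence the precise pair $(Y_{\mathrm{num}},Y_{\mathrm{den}})$ together with the right collapse sub-case. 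The conceptual content sits entirely in the conditional-independence factorization; granted that, each case is a one-line cardinality count.
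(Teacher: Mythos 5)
Your proposal is correct and follows essentially the same route as the paper: each conditional probability in $UI1$/$UI2$ is reduced, via the Markov structure of $\nu$ (which you phrase as conditional independence of the coordinates of one player's signal given the other's), to a ratio of counting sums equal to $\tfrac12\,Y_{\mathrm{num}}/Y_{\mathrm{den}}$ with one extra index in the numerator, and the bound $[1/2-\alpha,1/2+\alpha]$ is then read off the definition of $E$. Your representative case computations (including the collapse to single-index ratios when the reported signal equals the true one) match the paper's case analysis.
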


\begin{proof}
We fix  the law $\nu$ of the Markov chain on $A$  and assume that it has been  induced, as explained  at the beginning of section \ref{existence}, by a transition matrix $P$
 satisfying $E$. For $l\geq 1$, we forget  about the state in $K$  and  still denote by $u^l$ the marginal of $u^l$ over $C^l\times D^l$. If  $c=(c_1,...,c_l) \in C^l$ and $d=(d_1,...,d_l)\in D^l$, we have $u^l(c,d)=\nu (c_1,d_1,...,c_l,d_l)$. \\

Let us begin with condition UI2 which we recall here:  for all $1\leq p\leq l$,
for all ${d}\in D^{l}$, for all $d^{\prime }\in D^{p-1}$, for all $m\in
\{1,...,p-1\}$ such that ${d}_{m}\neq d_{m}^{\prime }$, for $r=2m-1,2m$,
\begin{equation*}
u^l\left( \tilde{c}\smile _{r+1}d^{\prime }|\tilde{d}={d},\tilde{c}\smile
_{r}d^{\prime }\right) \in [1/2-\alpha,1/2+\alpha], \;\; \hspace{3cm}(\ref{eq63})
\end{equation*}
where  $(\tilde{c}, \tilde{d})$ is a random variable selected according  to $u^l$.  
 The quantity $u^l\left( \tilde{c}\smile _{r+1}d^{\prime }|\tilde{d}={d},\tilde{c}\smile
_{r}d^{\prime }\right)$ is thus the conditional probability of the event $(\tilde{c}$ and $d'$ are nice at level $r+1$) given that they are nice at level $r$ and that the signal  received by player 2  is $d$. We divide the problem into different
cases.

\underline{Case $m>1$ and $r=2m-1$. }

Note that the events $\{\tilde{c}\smile _{2m}d'\}$ and $\{\tilde{c}\smile
_{2m-1}d'\}$ can be decomposed as follows :
\begin{eqnarray*}
\{\tilde{c} \smile_{2m-1}d'\}&=&\{\tilde{c}\smile _{2m-2}d'\}\cap
\{X_{d'_{m-1},\tilde{c}_{m}}=1\},\\
\{\tilde{c} \smile_{2m}d'\}&=&\{\tilde{c}\smile _{2m-2}d'\}\cap
\{X_{d_{m-1}^{\prime },\tilde{c}_{m}}=1\}\cap \{X_{\tilde{c}_{m},d'_m}=1\}.
\end{eqnarray*}%
So $u^l\left( \tilde{c}\smile _{2m}d^{\prime }|\tilde{d}={d},\tilde{c}\smile
_{2m-1}d^{\prime }\right)= u^l\left( X_{\tilde{c}_m,d'_m}=1| \tilde{d}={d},\tilde{c}\smile
_{2m-1}d^{\prime }\right)$, and the Markov property gives: 
\begin{eqnarray*}u^l\left( \tilde{c}\smile _{2m}d^{\prime }|\tilde{d}={d},\tilde{c}\smile
_{2m-1}d^{\prime }\right)& =& u^l\left( X_{\tilde{c}_m,d'_m}=1| X_{d'_{m-1},\tilde{c}_m}=1, X_{d_{m-1},\tilde{c}_m}=1, X_{\tilde{c}_m,d_m}=1\right),\\
& = & \frac{\sum_{i\in U} X_{i,d_{m}^{\prime }} X_{d'_{m-1},i} X_{d_{m-1},i}X_{i,d_{m}}}
{\sum_{i\in U}X_{d'_{m-1},i}X_{d_{m-1},i}X_{i,d%
_{m}}}.
\end{eqnarray*}%

This is equal  to 
$\frac{1}{2}\frac{Y_{d_{m},d_{m}^{\prime
}}^{d_{m-1},d_{m-1}^{\prime }}}{Y_{d_{m}}^{d%
_{m-1},d_{m-1}^{\prime }}}$ if $d'_{m-1}\neq d_{m-1}$, and to $\frac{1}{2}\frac{Y_{%
d_{m},d_{m}^{\prime }}^{d_{m-1}}}{Y_{d_{m}}^{d_{m-1}}%
}$ if  $d_{m-1}^{\prime }=d_{m-1}$. In both cases,  $E$ implies %
(\ref{eq63}).

\underline{Case $r=2m$.}

We have $u^l\left(\tilde{c}\smile _{2m+1}d'|\tilde{d}=d,\tilde{c}\smile _{2m}d^{\prime
}\right) =u^l\left( X_{d_{m}^{\prime },\tilde{c}_{m+1}}=1|\tilde{d}=d,\tilde{c}\smile _{2m}d^{\prime
} \right)$, and by the Markov property :
\begin{eqnarray*}
u^l\left(\tilde{c}\smile _{2m+1}d'|\tilde{d}=d,\tilde{c}\smile _{2m}d^{\prime
}\right) &=&u^l\left( X_{d_{m}^{\prime },\tilde{c}_{m+1}}=1| X_{{d_m},\tilde{c}_{m+1}}=1, X_{\tilde{c}_{m+1}, d_{m+1}}=1\right),\\
&=&\frac{\sum_{i\in U}X_{d_{m}^{\prime
},i}X_{d_{m},i}X_{i,d_{m+1}}}{\sum_{i\in U}X_{d_m,i}X_{i,d_{m+1}}} \\
&=&\frac{1}{2}\frac{Y_{%
d_{m+1}}^{d'_{m},d_{m}}}{Y_{d_{m+1}}^{d_{m}}}\;  \in [1/2-\alpha,1/2+\alpha]. \end{eqnarray*}%


\underline{Case $m=1$, $r=1$.}
\begin{eqnarray*}
u^l\left( \tilde{c}\smile _{2}d'|\tilde{d}=d,\tilde{c}\smile _{1}d^{\prime
}\right) &=&u^l\left( \tilde{c}\smile _{2}d'|\tilde{d}=d\right), \\
&=&u^l\left( X_{\tilde{c}_{1},d_{1}^{\prime }}=1|X_{\tilde{c}_{1},d_{1}}=1\right),\\
&= &\frac{\sum_{i\in U}X_{i,d'_{1}}X_{i,d_{1}}}{\sum_{i\in
U}X_{i,d_{1}}}\\
& =&\frac{1}{2} \frac{Y_{d_{1},d_{1}^{\prime }}}{Y_{d_{1}}}\;  \in [1/2-\alpha,1/2+\alpha]. 
\end{eqnarray*}

Let us now consider   condition  $UI1$:    we require that  for all $l\geq 1$,  
all ${c}=({c}_{1},...,{c}_{l})$ in $C^{l}$ and $c^{\prime
}=(c_{1}^{\prime },...,c_{l+1}^{\prime })$ in $C^{l+1}$  such that $%
{c}_{1}=c_{1}^{\prime }$, we have 
\begin{equation*} 
  u^l \left( c^{\prime }\smile _{2l+1}\tilde{d}\;  \big| \; \tilde{c}={c},c^{\prime
}\smile _{2l}\tilde{d}\right)  \in [1/2-\alpha,1/2+\alpha] \hspace{3cm} (\ref{eq61})
\end{equation*}%
and  for all $m\in \{1,...,l\}$ such that ${c}%
_{m}\neq c_{m}^{\prime }$, for $r=2m-2,2m-1$,
\begin{equation*} 
 u^l \left(c^{\prime }\smile _{r+1}\tilde{d} \; \big| \; \tilde{c}=c,c^{\prime
}\smile _{r} \tilde{d}\right)  \in [1/2-\alpha,1/2+\alpha].\hspace{3cm} (\ref{eq62})
\end{equation*}%

We start with (\ref{eq61}). 
 \begin{eqnarray*}
u ^{l}\left( c^{\prime }\smile _{2l+1}\tilde{d}|\tilde{c}=c,c^{\prime }\smile
_{2l}\tilde{d}\right) &=&u^l\left( X_{\tilde{d}_{l},c_{l+1}^{\prime
}}=1|\tilde{c}=c,c^{\prime }\smile
_{2l}\tilde{d}\right), \\
& = & u^l\left( X_{\tilde{d}_{l},c_{l+1}^{\prime
}}=1|X_{c_{l}^{},\tilde{d}_{l}}=1,X_{c_{l}^{\prime },\tilde{d}_{l}}=1\right) , \\
&=&\frac{\sum_{i\in V}X_{i,c_{l+1}^{\prime }} X_{c_{l},i}X_{c_{l}^{\prime
},i}}{\sum_{i\in V}X_{c_{l},i}X_{c_{l}^{\prime
},i}} .
\end{eqnarray*}
 This is $\frac{1}{2}\frac{Y_{c_{l+1}}^{c_{l},c_{l}^{\prime }}}{Y^{%
c_{l},c_{l}^{\prime }}}$ if $c_{l}^{\prime }\neq c_{l}$, and $\frac{1}{2}\frac{Y_{c_{l+1}}^{c_{l}}}{Y^{%
c_{l}}}$ if $c_{l}^{\prime }= c_{l}$. In both cases,  (\ref{eq61})
holds.\\

We finally consider (\ref{eq62}) and distinguish several case.

\underline{Case $r=2m-1$ and $m=l$.} 

\begin{eqnarray*}
u^{l}\left(c^{\prime}\smile _{2l}\tilde{d}|\tilde{c}=c,c^{\prime }\smile
_{2l-1}\tilde{d}\right) &=&u^{l}\left( X_{c_{l}^{\prime },\tilde{d}_{l}}=1| \tilde{c}=c,c^{\prime }\smile
_{2l-1}\tilde{d} \right) , \\
&=& u^{l}\left( X_{c_{l}^{\prime },\tilde{d}_{l}}=1|  X_{c_{l}^{},\tilde{d}_{l}}=1 \right) , \\
&=&\frac{\sum_{i\in V}X_{c'_{l},i}X_{c_{l},i}}{\sum_{i\in
V}X_{c_{l},i}},\\
&=& \frac{1}{2}\frac{Y^{c'_{l},c_{l}^{}}}{Y^{c%
_{l}}} \; \in [1/2-\alpha,1/2+\alpha].
\end{eqnarray*}%

\underline{Case $r=2m-1$ and $m<l$.} 

\begin{eqnarray*}
u^{l}\left(c^{\prime}\smile _{2m}\tilde{d}|\tilde{c}=c,c^{\prime }\smile
_{2m-1}\tilde{d}\right) &=&u^{l}\left( X_{c_{m}^{\prime },\tilde{d}_{m}}=1| \tilde{c}=c,c^{\prime }\smile
_{2m-1}\tilde{d} \right) , \\
&=& u^{l}\left( X_{c_{m}^{\prime },\tilde{d}_{m}}=1|  X_{c_{m},\tilde{d}_{m}}=1,  X_{\tilde{d}_{m}, c_{m+1}}=1 \right) , \\
&=& \frac{\sum_{i\in V}X_{c'_{m},i}X_{c_{m},i}X_{i,c_{m+1}}} {\sum_{i\in V}X_{c_{m},i}X_{i,c_{m+1}}},\\
&=& \frac{1}{2}\frac{Y^{c'_{m},c_{m}}_{c_{m+1}}} {Y^{c_{m}}_{c_{m+1}}} \; \in [1/2-\alpha,1/2+\alpha].
\end{eqnarray*}

\underline{Case $r=2m-2$.}

\begin{eqnarray*}
u^{l}\left(c^{\prime}\smile _{2m-1}\tilde{d}|\tilde{c}=c,c^{\prime }\smile
_{2m-2}\tilde{d}\right) &=&u^{l}\left( X_{\tilde{d}_{m-1},c_{m}^{\prime }}=1| \tilde{c}=c,c^{\prime }\smile
_{2m-1}\tilde{d} \right) , \\
&=& u^{l}\left( X_{\tilde{d}_{m-1},c_{m}^{\prime }}=1|  X_{c'_{m-1},\tilde{d}_{m-1}}= X_{c_{m-1},\tilde{d}_{m-1}}=  X_{\tilde{d}_{m-1}, c_{m}}=1 \right) , \\
&=& \frac{\sum_{i\in V}X_{i,c'_{m}}X_{i,c_{m}}X_{c'_{m-1},i}X_{c_{m-1},i}} {\sum_{i\in V}X_{i,c_{m}} X_{c'_{m-1},i}X_{c_{m-1},i}}.
\end{eqnarray*}
This is  $\frac{1}{2}\frac{Y^{c'_{m-1},c_{m-1}}_{c'_{m},c_m}} {Y^{c'_{m-1}c_{m-1}}_{c_{m}}}$ if $c_{m-1}\neq  c'_{m-1}$, and      $\frac{1}{2}\frac{Y^{c_{m-1}}_{c'_{m},c_m}} {Y^{c_{m-1}}_{c_{m}}}$  if $c_{m-1}=  c'_{m-1}$. In both cases, it belongs to $[1/2-\alpha,1/2+\alpha]$, concluding the proofs of lemma \ref{lemT4}, proposition \ref{pro3}   and theorem \ref{thm3}. \end{proof}

\bibliographystyle{plain}
\bibliography{zerosumnew}

\end{document}